\newcommand\Res{{\rm Res}}
\newcommand{\bfk}{{\boldsymbol{\sl{k}}}}
\newcommand\bfeta{{\boldsymbol \eta}}
\def\int{\displaystyle\!int}
\def\lim{\displaystyle\!lim}
\def\sum{\displaystyle\!sum}
\def\sup{\displaystyle\!sup}
\def\inf{\displaystyle\!inf}
\def\cap{\displaystyle\!cap}
\def\max{\displaystyle\!max}
\def\min{\displaystyle\!min}
\def\frac{\displaystyle\!frac}
\let\oldsection\section
\renewcommand\section{\setcounter{equation}{0}\oldsection}
\DeclareMathOperator*{\dep}{dep}
\def\N{\mathbb{N}}
\def\Z{\mathbb{Z}}
\def\Q{\mathbb{Q}}
\def\ze{\zeta}
\theoremstyle{plain}
\newtheorem{thm}{Theorem}[section]
\newtheorem{lem}[thm]{Lemma}
\newtheorem{cor}[thm]{Corollary}
\theoremstyle{definition}
\newtheorem{re}[thm]{Remark}
\begin{document}
\title{\bf Parametric Euler Sums of Harmonic Numbers}
\author{
{Junjie Quan${}^{a,}$\thanks{Email:  as6836039@163.com} \quad Xiyu Wang${}^{b,}$ \thanks{Email: xiyuwang2021@outlook.com.} \quad Xiaoxue Wei${}^{c,}$\thanks{Email: xiaoxueweidufe@163.com.}\quad Ce Xu${}^{b,}$\thanks{Email: cexu2020@ahnu.edu.cn}}\\[1mm]
\small a. School of Information Science and Technology,  Xiamen University Tan Kah Kee College\\
\small Xiamen Fujian 363105, P.R. China\\
\small b. School of Mathematics and Statistics, Anhui Normal University,\\ \small Wuhu 241002, P.R. China\\
\small c. School of Economics and Management, Anhui Normal University,\\ \small Wuhu 241002, P.R. China
}

\date{}
\maketitle

\noindent{\bf Abstract.} We define a parametric variant of generalized Euler sums and construct contour integration to give some explicit evaluations of these parametric Euler sums. In particular, we establish several explicit formulas of (Hurwitz) zeta functions, linear and quadratic parametric Euler sums. Furthermore, we also give an explicit evaluation of alternating double zeta values $\ze(\overline{2j},2m+1)$ in terms of a combination of alternating Riemann zeta values by using the parametric Euler sums.\\
\medskip
\noindent{\bf Keywords}: Parametric Euler sums, harmonic numbers, contour integral, residue theorem.

\noindent{\bf AMS Subject Classifications (2020):} 11M32, 11M99

\medskip
\noindent{\bf Declaration of interest.} None.

\section{Introduction}
We begin with some basic notations. Let $\Z,\N$ and $\N^-$ be the set of integers, positive integers and negative integers, respectively, $\N_0:=\N\cup \{0\}$ and $\N^-_0:=\N^-\cup \{0\}$. For $p\in\N$ and $n\in\N_0$, the \emph{generalized harmonic number} $H_n^{(p)}$ is defined by
\begin{align}
H_n^{(p)}:=\sum_{k=1}^n \frac{1}{k^p}\quad \text{and}\quad H_0^{(p)}:=0,
\end{align}
where if $p=1$ then $H_n\equiv H_n^{(1)}$ is the \emph{classical harmonic number}.

Between late 1742 and early 1743, Euler first touched on the subject of the \emph{linear Euler sums} in a series of correspondence with Goldbach. In modern notation, these are defined
as follows:
\begin{align}
S_{p;q}:=\sum_{n\geq k\geq 1}^\infty \frac{1}{k^pn^q}=\sum_{n=1}^\infty \frac{H_n^{(p)}}{n^q},
\end{align}
where $p,q\in \N$ and $q\geq 2$ is to ensure convergence of the series.

Euler returned to the same subject after about 30 years and discovered the now famous Euler's decomposition formula in \cite{Euler1776}. More than two hundred years later, these objects were generalized to the so-called \emph{generalized Euler sums} by Flajolet and Salvy \cite{FS1998}:
\begin{align}\label{Defn-Gen-Euler-Sums}
{S_{{\bf p};q}} := \sum\limits_{n = 1}^\infty  {\frac{{H_n^{\left( {{p_1}} \right)}H_n^{\left( {{p_2}} \right)} \cdots H_n^{\left( {{p_r}} \right)}}}
{{{n^q}}}},
\end{align}
where ${\bf p}=(p_1,p_2,\ldots,p_r)\in \N^r$ with $p_1\leq p_2\leq \cdots\leq p_r$ and $q\in \N\setminus \{1\}$. The quantity $w:={p _1} +  \cdots  + {p _r} + q$ is called the weight and the quantity $r$ is called the degree (order). Moreover, if $r>1$ in \eqref{Defn-Gen-Euler-Sums}, they were called \emph{nonlinear Euler sums}. As usual, repeated summands in partitions are indicated by powers, so that for instance
\[{S_{{1^3}{2^2}5;q}} = {S_{111225;q}} = \sum\limits_{n = 1}^\infty  {\frac{{H_n^3[H^{(2)} _n]^2{H^{(5)} _n}}}{{{n^q}}}}. \]

The Euler sums is in contrast to \emph{multiple zeta values} (abbr. MZVs) defined in \cite{H1992,DZ1994} as follows:
\begin{align}\label{Defn-MZV}
\zeta(\bfk)\equiv \zeta(k_1,\ldots,k_r):=\sum\limits_{n_1>\cdots>n_r>0 } \frac{1}{n_1^{k_1}\cdots n_r^{k_r}},
\end{align}
where $\bfk=(k_1,k_r,\ldots,k_r)\in \N^r$ and $k_1>1$ is ensure convergence of the series. Here $|\bfk|:=k_1+\cdots+k_r$ and $\dep(\bfk):=r$ were called the depth and the weight of MZVs, respectively. Clearly, if $r=1$ and $k_1=k$ then it becomes the Riemann zeta value $\zeta(k)\ (k\in \N\setminus \{1\})$. This theory indeed dates back to Hoffman \cite{H1992} and Zagier \cite{DZ1994} (independently at almost the same time), while recent research on this topic
has been quite active. For instance, these quantities have appeared in several areas of mathematics and physics, and have a remarkable
depth of algebraic structure in the past three decades (for example, see the book by Zhao \cite{Z2016}). Recently, several variants of classical multiple zeta values of level 2 called multiple $t$-values (abbr. MtVs), multiple $T$-values (abbr. MTVs) and multiple mixed values (abbr. MMVs) were introduced and studied in Hoffman \cite{H2019}, Kaneko-Tsumura \cite{KTA2019} and Xu-Zhao \cite{XZ2020}. It is clear that every MMV (MtV or MTV) can be written as a $\Q$-linear combination of colored MZVs of level two. The colored MZV (abbr. CMZV) of level $N$ is defined for any $(k_1,\dotsc, k_r)\in\N^r$ and $N$th roots of unity $\eta_1,\dotsc,\eta_r$ by (see Yuan-Zhao \cite{YuanZh2014a})
\begin{equation*}
Li_{k_1,\dotsc, k_r}(\eta_1,\dotsc,\eta_r):=\sum\limits_{n_1>\cdots>n_r>0}
\frac{\eta_1^{n_1}\dots \eta_r^{n_r}}{n_1^{k_1} \dots n_r^{k_r}}
\end{equation*}
which converges if $(k_1,\eta_1)\ne (1,1)$ (see \cite[Ch. 15]{Z2016}), in which case we call $(\bfk,\bfeta)$ \emph{admissible}.
The study found that there should be rich connections between Euler sums and multiple zeta values. However, we do not pursue any study of MZVs' aspects in this paper.

For an early introduction and study on the evaluations of Euler sums, the readers may consult in Bailey-Borwein-Girgensohn's \cite{BBG1994} and Flajolet-Salvy's paper \cite{FS1998}, in which they have developed experimental method and a contour integral representation approach to the evaluation of Euler sums, respectively. For
some recent progress, the readers are referred to \cite{M2014,W2017,XW2018} and references therein. Recently, some parametric Euler sums were introduced and studied, see \cite{BBD2008,Xu2017-JMAA,XC2019} and references therein. For example, in \cite[Thm. 1]{BBD2008}, Borweins and Bradley proved the results that
\begin{align*}
\sum_{n=1}^\infty \frac{1}{n(n-a)}\sum_{k=1}^{n-1} \frac{1}{k-a}=\sum_{n=1}^\infty \frac{1}{n(n-a)}\quad (a\notin \N^-).
\end{align*}
If setting $x=0$, then we obtain the well-known identity $\zeta(2,1)=\zeta(3)$. In \cite{Xu2017-JMAA}, the last author shown the formula ($a\notin \Z\setminus \{0\}$)
\begin{align*}
\frac{5}{2}\sum\limits_{n = 1}^\infty  {\frac{1}{{{{\left( {{n^2} - {a^2}} \right)}^2}}}}  - {\left( {\sum\limits_{n = 1}^\infty  {\frac{1}{{{n^2} - {a^2}}}} } \right)^2} = 2{a^2}\left\{ {\sum\limits_{n = 1}^\infty  {\frac{1}{{{n^2} - {a^2}}}} \sum\limits_{n = 1}^\infty  {\frac{1}{{{{\left( {{n^2} - {a^2}} \right)}^2}}}}  - \sum\limits_{n = 1}^\infty  {\frac{1}{{{{\left( {{n^2} - {a^2}} \right)}^3}}}} } \right\}.
\end{align*}
If setting $x=0$ gives $\zeta(2)^2=\frac{5}{2}\zeta(4)$. Therefore, it is possible to obtain many classical results of Euler sums and MZVs by studying the parametric Euler sums. In this paper, we will use the approach of contour integral representation to study the following (alternating) parametric Euler sums involving generalized harmonic numbers
\begin{align}\label{Defn-para--Gen-Euler-Sums}
{S^{\sigma}_{{\bf p};{\bf q}}}(a_1,a_2,\ldots,a_m) := \sum\limits_{n = 1}^\infty  {\frac{{H_n^{\left( {{p_1}} \right)}H_n^{\left( {{p_2}} \right)} \cdots H_n^{\left( {{p_r}} \right)}\sigma^n}}
{(n+a_1)^{q_1}(n+a_2)^{q_2}\cdots (n+a_m)^{q_m}}}\quad (a_1,\ldots,a_m\notin \N^-),
\end{align}
where $\sigma\in \{\pm 1\},{\bf p}=(p_1,\ldots,p_r)\in \N^r$ and ${\bf q}=(q_1,\ldots,q_m)\in \N^m_0$ with $q_1+\cdots+q_m\geq 2$. Obviously, if $\sigma=1,m=1$ and $a_1=0,q_1=q$ in \eqref{Defn-para--Gen-Euler-Sums}, then it becomes the classical Euler sums $S_{{\bf p};q}$ \eqref{Defn-Gen-Euler-Sums}.

\section{Main Results}

We define a complex kernel function $\xi(z)$ with two requirements: (i). $\xi(z)$ is meromorphic in the whole complex plane. (ii). $\xi(z)$ satisfies $\xi (z)=o(z)$ over an infinite collection of circles $|z|=\rho_k$ with $\rho_k\to \infty $. Applying these two conditions of kernel function $\xi(z)$, Flajolet and Salvy discovered the following residue lemma.
\begin{lem}\emph{(cf. \cite{FS1998})}\label{lem-residue}
Let $\xi(z)$ be a kernel function and let $r(z)$ be a rational function which is $O(z^{-2})$ at infinity. Then
\begin{align*}
\sum_{\alpha\in E} \Res(r(z)\xi(z),\alpha)+ \sum_{\beta\in S} \Res(r(z)\xi(z),\beta) = 0,
\end{align*}
where $S$ is the set of poles of $r(z)$ and $E$ is the set of poles of $\xi(z)$ that are not poles of $r(z)$. Here $\Res(r(z),\alpha)$ denotes the residue of $r(z)$ at $z=\alpha$.
\end{lem}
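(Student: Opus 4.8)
The plan is to apply the residue theorem on the family of circles $|z|=\rho_k$ furnished by the definition of a kernel function, and then let $k\to\infty$. To this end I would fix $k$ large and consider the contour integral
\[I_k := \frac{1}{2\pi i}\oint_{|z|=\rho_k} r(z)\xi(z)\,dz.\]
By the residue theorem $I_k$ equals the sum of the residues of $r(z)\xi(z)$ over all poles of the integrand lying inside the disk $|z|<\rho_k$. Since $r(z)$ is rational it has only finitely many poles, which constitute $S$, and the remaining poles of $r(z)\xi(z)$ are those poles of $\xi(z)$ that survive the multiplication by $r$, i.e. the points of $E$; at a point of $S$ the residue $\Res(r(z)\xi(z),\beta)$ already incorporates any pole of $\xi$ sitting there, while at a point of $E$ the factor $r$ is holomorphic. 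Hence, for every $k$ large enough that all of $S$ lies inside $|z|<\rho_k$,
\[I_k = \sum_{\beta\in S} \Res(r(z)\xi(z),\beta) + \sum_{\substack{\alpha\in E\\ |\alpha|<\rho_k}} \Res(r(z)\xi(z),\alpha).\]

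Next I would estimate $I_k$ directly on the circle. The hypotheses give $r(z)=O(z^{-2})$ at infinity and $\xi(z)=o(z)$ over the circles $|z|=\rho_k$, so on $|z|=\rho_k$ the integrand satisfies $|r(z)\xi(z)|=o(\rho_k^{-1})$ uniformly. The standard length-times-maximum bound then yields
\[|I_k| \le \frac{1}{2\pi}\cdot 2\pi\rho_k \cdot \max_{|z|=\rho_k}|r(z)\xi(z)| = \rho_k\cdot o(\rho_k^{-1}) = o(1),\]
so $I_k\to 0$ as $k\to\infty$. Passing to the limit in the residue-theorem identity therefore forces
\[\sum_{\beta\in S} \Res(r(z)\xi(z),\beta) + \sum_{\alpha\in E} \Res(r(z)\xi(z),\alpha) = 0,\]
which is exactly the assertion.

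Two points deserve care. First, the circles $|z|=\rho_k$ must avoid every pole of the integrand; this is possible because the poles form a discrete set, and the existence of such a sequence $\rho_k\to\infty$ is precisely what the defining property $\xi(z)=o(z)$ on the $\rho_k$ is meant to supply. Second, one should not take for granted that the series $\sum_{\alpha\in E}\Res(r(z)\xi(z),\alpha)$ converges; rather, its convergence is a byproduct of the argument, since its partial sum over $|\alpha|<\rho_k$ equals $I_k-\sum_{\beta\in S}\Res(r(z)\xi(z),\beta)$, whose right-hand side tends to a limit. The step I expect to be the crux is the growth estimate $|r(z)\xi(z)|=o(\rho_k^{-1})$: it is the combination of the prescribed decay $r(z)=O(z^{-2})$ with the kernel bound $\xi(z)=o(z)$ that defeats the $2\pi\rho_k$ perimeter factor and makes the contour integral vanish.
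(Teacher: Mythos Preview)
Your argument is correct and is precisely the standard contour-integration proof: apply the residue theorem on the circles $|z|=\rho_k$, use $r(z)\xi(z)=O(z^{-2})\cdot o(z)=o(z^{-1})$ on those circles to kill the integral as $k\to\infty$, and read off the residue identity. The paper itself does not supply a proof of this lemma; it is quoted from Flajolet--Salvy \cite{FS1998}, whose argument is exactly the one you wrote, so there is nothing to compare.
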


Furthermore, Flajolet and Salvy \cite[Eq. (2.4)]{FS1998} found the fact that any polynomial form in $\pi \cot\pi z,\ \frac{\pi}{\sin \pi z},\ \psi^{(j)}(\pm z)$ is itself a kernel function with poles at a subset of the integers. Here, ${\psi ^{\left( j \right)}}\left( z \right)$ stands for the polygamma function of order $j$ defined as the $(j+1)$-st derivative of the logarithm of the gamma function:
\[{\psi ^{\left( j \right)}}\left( z \right): = \frac{{{d^j}}}{{d{z^j}}}\psi \left( z \right) = \frac{{{d^{j+1}}}}{{d{z^{j+1}}}}\log \Gamma \left( z \right).\]
Thus
\[{\psi ^{\left( 0 \right)}}\left( z \right) = \psi \left( z \right) = \frac{{\Gamma '\left( z \right)}}{{\Gamma \left( z \right)}}\]
holds, where $\psi (x)$ is the digamma function and $\Gamma \left( z \right)$ is the usual gamma function. Observe that ${\psi ^{\left( j \right)}}\left( z \right)$ satisfy the following relations
\[\psi \left( z \right) =  - \gamma  + \sum\limits_{n = 0}^\infty  {\left( {\frac{1}{{n + 1}} - \frac{1}{{n + z}}} \right)} ,\;z\notin  \N^-_0, \]
\[{\psi ^{\left( j \right)}}\left( z \right) = {\left( { - 1} \right)^{j+ 1}}j!\sum\limits_{k = 0}^\infty  {1/{{\left( {z + k} \right)}^{j + 1}}},\ j\in \N,\]
\[\psi \left( {z + n} \right) = \frac{1}{z} + \frac{1}{{z + 1}} +  \cdots  + \frac{1}{{z + n - 1}} + \psi \left( z \right),\;n \in \N .\]
where, $\gamma$ denotes the Euler-Mascheroni constant, defined by
\[\gamma  := \mathop {\lim }\limits_{n \to \infty } \left( {\sum\limits_{k = 1}^n {\frac{1}{k}}  - \log n} \right) =  - \psi \left( 1 \right) \approx {\rm{ 0 }}{\rm{. 577215664901532860606512 }}....\]
Moreover, from classical expansions and the properties of $\psi$ function, they listed the following expressions of $\pi \cot\pi z$ and $\psi^{(j)}(- z)$ at an integer $n$.

\begin{lem}\emph{(cf. \cite{FS1998})}\label{lem-approach-psi} For integer $p$,
\begin{align}
&\pi \cot(\pi z)\overset{z\rightarrow n}{=}\frac{1}{z-n}-2\sum\limits_{k=1}^{\infty}\zeta(2k)(z-n)^{2k-1}\quad (n\in\Z),\\
&\frac{\pi}{\sin(\pi z)}\overset{z\rightarrow n}{=}(-1)^n \left(\frac1{z-n}+2\sum_{k=1}^\infty {\bar \zeta}(2k)(z-n)^{2k-1}\right)\quad (n\in\Z),\\
&\frac{\psi^{(p-1)}(-z)}{(p-1)!}
\overset{z\rightarrow n}{=}\frac{1}{(z-n)^{p}}\sum\limits_{k=1}^{\infty}\binom{k+p-2}{p-1}
\left[(-1)^{p}\zeta(k+p-1)-(-1)^{k}H^{(k+p-1)}_{n}\right]\left(z-n\right)^{k+p-1}\nonumber\\
&\quad\quad\quad\quad\quad\quad+\frac{1}{(z-n)^{p}}\quad (n\in\N_0),\\
&\frac{\psi^{(p-1)}(-z)}{(p-1)!}\overset{z\rightarrow -n}{=}(-1)^{p}\sum\limits_{k=1}^{\infty}\binom{p+k-2}{p-1}
 \left[\zeta(p+k-1)-H^{(p+k-1)}_{n-1}\right](z+n)^{k-1}\quad (n\in \N),
\end{align}
where $\ze(1)$ should be interpreted as $0$ and if $p=1$, replace $\psi(-z)$ by $\psi(-z)+\gamma$. ${\bar \zeta}(s)$ denotes the \emph{alternating Riemann zeta function} which is defined by
\begin{align}
{\bar \zeta}(s):=\sum_{n=1}^\infty \frac{(-1)^{n-1}}{n^s}\quad (\Re(s)\geq 1).
\end{align}
\end{lem}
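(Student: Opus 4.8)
The plan is to obtain all four expansions by the same two-step mechanism: start from a globally valid series representation of the function, then perform a local geometric (binomial) expansion about the relevant integer and collect coefficients.

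For the trigonometric kernels I would argue as follows. For $\pi\cot(\pi z)$, begin from Euler's product $\sin(\pi z)=\pi z\prod_{k\ge1}(1-z^2/k^2)$ and take the logarithmic derivative to get $\pi\cot(\pi z)=\tfrac1z-\sum_{k\ge1}\tfrac{2z}{k^2-z^2}$. Expanding each summand geometrically as $\tfrac{2z}{k^2-z^2}=2\sum_{j\ge0}z^{2j+1}/k^{2j+2}$ and interchanging the (locally uniformly convergent) double sum identifies the coefficient of $z^{2k-1}$ with $2\zeta(2k)$, which is the stated expansion at $z=0$; the general integer $n$ follows from the periodicity $\cot(\pi(z+n))=\cot(\pi z)$, i.e.\ from replacing $z$ by $z-n$. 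For $\pi/\sin(\pi z)$ I would instead use the Mittag--Leffler expansion $\pi/\sin(\pi z)=\sum_{n\in\Z}(-1)^n/(z-n)$. The same geometric expansion now produces the alternating sums $\sum_{k\ge1}(-1)^{k-1}/k^{2j+2}=\bar\zeta(2j+2)$, while the overall factor $(-1)^n$ is exactly the quasi-periodicity $\sin(\pi(z+n))=(-1)^n\sin(\pi z)$.

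For the polygamma expansions the main input is the series listed just above, which for $p\ge2$ reads $\frac{\psi^{(p-1)}(-z)}{(p-1)!}=\sum_{k\ge0}(z-k)^{-p}$. To expand near $z=n$ with $n\in\N_0$, I would isolate the single term $k=n$, which supplies the principal part $(z-n)^{-p}$, and expand each remaining term with $u:=z-n$ via $(z-k)^{-p}=\bigl((n-k)+u\bigr)^{-p}=\sum_{j\ge0}\binom{-p}{j}(n-k)^{-p-j}u^{j}$, using $\binom{-p}{j}=(-1)^{j}\binom{p+j-1}{p-1}$. After interchanging the two sums, the coefficient of $u^{j}$ splits into the finite block $0\le k<n$, summing to $(-1)^{j}\binom{p+j-1}{p-1}H_n^{(p+j)}$, and the tail $k>n$, summing to $(-1)^{p}\binom{p+j-1}{p-1}\zeta(p+j)$; re-indexing $j=k-1$ and recombining signs yields precisely the bracket $(-1)^{p}\zeta(k+p-1)-(-1)^{k}H_n^{(k+p-1)}$. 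The expansion at $z=-n$ with $n\in\N$ is the same computation, except that now $-z=n>0$ avoids all poles of $\psi^{(p-1)}$, so there is no principal part; writing $u:=z+n$ and $(z-k)^{-p}=(-1)^{p}\bigl((n+k)-u\bigr)^{-p}$, the tail sum collapses to $\sum_{k\ge0}(n+k)^{-(p+j)}=\zeta(p+j)-H_{n-1}^{(p+j)}$, which is the claimed Taylor coefficient.

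The algebra of signs and the index shift $j=k-1$ are routine; the two points I expect to need real care are the rearrangement of the double series and the case $p=1$. For the interchange I would note that on a punctured disk $0<|z-n|<\tfrac12$ one has $|z-k|\ge\tfrac12|k-n|$ for all $k\ne n$, so the family $\{(z-k)^{-p}\}_{k\ne n}$ is dominated by a convergent series; the sum is therefore locally uniformly convergent and analytic, and its Laurent/Taylor coefficients are genuinely the sums of the individual coefficients computed above. The case $p=1$ is excluded from the $\psi^{(j)}$ series and must instead be handled through the digamma relation $\psi(-z)+\gamma=\sum_{k\ge0}\bigl(\tfrac{1}{k+1}-\tfrac{1}{k-z}\bigr)$ listed above; this is exactly the prescribed replacement of $\psi(-z)$ by $\psi(-z)+\gamma$, and the would-be divergent tail term $\zeta(1)$ is cancelled by the regularizing $1/(k+1)$, which is why it is read as $0$. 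I expect this regularization bookkeeping, rather than any genuine analytic difficulty, to be the one step that most needs to be stated carefully.
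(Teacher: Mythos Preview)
Your argument is correct and complete. The paper does not supply its own proof of this lemma; it is quoted from Flajolet--Salvy \cite{FS1998} as a known input (note the ``cf.''), so there is no in-paper proof to compare against. Your derivation---partial fractions/Mittag--Leffler for the trigonometric kernels, and termwise binomial expansion of the series $\frac{\psi^{(p-1)}(-z)}{(p-1)!}=\sum_{k\ge0}(z-k)^{-p}$ for the polygamma kernels---is exactly the classical route, and your sign bookkeeping checks out: with $j=k-1$ the finite block gives $(-1)^{j}\binom{p+j-1}{p-1}H_n^{(p+j)}$ and the tail gives $(-1)^{p}\binom{p+j-1}{p-1}\zeta(p+j)$, matching the bracket $(-1)^{p}\zeta(k+p-1)-(-1)^{k}H_n^{(k+p-1)}$ since $-(-1)^{k}=(-1)^{j}$. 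Your treatment of the $p=1$ regularization (the $1/(k+1)$ counterterms absorbing the would-be $\zeta(1)$) and of the uniform-convergence justification via $|z-k|\ge\tfrac12|k-n|$ are both the right points to flag and are handled adequately.
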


In \cite{FS1998}, Flajolet and Salvy used residue computations on large circular contour and specific functions to obtain more independent relations for classical Euler sums. These functions are of the form $\xi(z)r(z)$, where $r(z):=1/{z^q}$ and $\xi(z)$ is a product of cotangent (or cosecant) and polygamma function. In \cite{Xu2017-JMAA}, the last author used the method of Flajolet and Salvy to obtain some explicit evaluations of parametric Euler sums. Next, we will use a similar method with the help of Lemmas \ref{lem-residue} and \ref{lem-approach-psi} to give some explicit evaluations of the parametric Euler sums \eqref{Defn-para--Gen-Euler-Sums}.

\subsection{Linear Parametric Euler Sums}

In this subsection, we apply the contour integral representation approach to consider the linear parametric Euler sums
\[\sum_{n=1}^\infty \frac{H_n^{(p)}}{(n+a)(n+b)}\ \text{and}\ \sum_{n=1}^\infty \frac{H_n^{(p)}}{(n+a)(n+b)}(-1)^n.\]

\begin{thm}\label{thm-para2-harmonic-double-sum} For positive integer $p$ and complexes $a,b$ with $a\neq b$ and $a,b\notin \Z$, we have
\begin{align}\label{para2-harmonic-double-sum}
&\sum_{n=1}^\infty \frac{H_n^{(p)}}{(n+a)(n+b)}-(-1)^p \sum_{n=1}^\infty \frac{H_{n-1}^{(p)}}{(n-a)(n-b)}\nonumber\\
&=2\frac{(-1)^p}{b-a} \sum_{k=0}^{[p/2]} \ze(2k)\Big(\ze(p-2k+1;a)-\ze(p-2k+1;b)\Big)\nonumber\\
&\quad+\frac{(-1)^p}{b-a} \Big(\pi \cot(\pi a)\big(\ze(p;a)-\ze(p)\big)-\pi \cot(\pi b) \big(\ze(p;b)-\ze(p)\big)\Big),
\end{align}
where $\ze(0):=-1/2$ and $\ze(1;a):=-(\psi(a)+\gamma)$, and $\ze(s;a)$ is \emph{Hurwitz zeta function} defined by ($a\neq 0,-1,-2,-3,\ldots$)
\[\ze(s;a):=\sum_{n=0}^\infty \frac{1}{(n+a)^s}\quad (\Re(s)>1).\]
\end{thm}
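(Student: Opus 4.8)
The plan is to apply the Flajolet--Salvy residue machinery (Lemma \ref{lem-residue}) to the kernel and rational function
\[
\xi(z)=\pi\cot(\pi z)\,\frac{\psi^{(p-1)}(-z)}{(p-1)!},\qquad r(z)=\frac{1}{(z+a)(z+b)}
\]
(with $\psi(-z)$ replaced by $\psi(-z)+\gamma$ when $p=1$). Here $r(z)=O(z^{-2})$, and by the standard growth estimates on circles avoiding the integers one checks $\xi(z)r(z)=o(z^{-1})$, so Lemma \ref{lem-residue} gives that the total sum of residues of $r\xi$ vanishes, the sum being over the poles $z=-a,-b$ of $r$ and over all integers (the poles of $\xi$). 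First I would split this into four families: the two simple poles at $z=-a,-b$; the poles of order $p+1$ at the positive integers $z=N$, where both factors of $\xi$ are singular; the pole of order $p+1$ at $z=0$; and the simple poles at the negative integers $z=-n$, where only the cotangent is singular.

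The heart of the computation is the residue at a positive integer $N$. Writing $t=z-N$ and inserting the local expansions of Lemma \ref{lem-approach-psi} for $\pi\cot(\pi z)$ and $\psi^{(p-1)}(-z)/(p-1)!$ together with the Taylor expansion
\[
r(N+t)=\frac{1}{b-a}\sum_{j\ge 0}(-1)^j\Big[\tfrac{1}{(N+a)^{j+1}}-\tfrac{1}{(N+b)^{j+1}}\Big]t^j,
\]
I would extract the coefficient of $t^{-1}$. A short bookkeeping argument shows only a few products survive: pairing the simple pole of $\cot$ with the lowest regular term of the polygamma reproduces exactly $H_N^{(p)}/((N+a)(N+b))$ together with a $(-1)^p\ze(p)$-proportional piece; pairing the order-$p$ pole of the polygamma with the $\ze(2k)$-coefficients of $\cot$ and the $j=p-2k$ Taylor coefficient of $r$ produces the $\ze(2k)$ Hurwitz terms for $1\le k\le[p/2]$; and the top singular product yields the $\ze(p+1)$ (that is, $k=0$) Hurwitz term. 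Summing over $N\ge 1$ and folding in the $z=0$ residue extends every inner sum from $N\ge 1$ to $N\ge 0$, which is precisely what turns $\sum_{N\ge0}(N+a)^{-s}$ into the Hurwitz value $\ze(s;a)$. The negative-integer residues are simple: $\Res(r\xi,-n)=(-1)^p\big[\ze(p)-H_{n-1}^{(p)}\big]/((n-a)(n-b))$, supplying the second sum on the left-hand side of \eqref{para2-harmonic-double-sum} together with an auxiliary $\ze(p)$-multiple.

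Finally I would assemble everything. The residues at $-a,-b$ are evaluated from $\psi^{(p-1)}(a)/(p-1)!=(-1)^p\ze(p;a)$ and $\cot(-\pi a)=-\cot(\pi a)$, and they contribute the bare $\pi\cot$ terms. The scattered $\ze(p)$-multiples coming from the integer residues then combine, through the classical reflection identity $\sum_{m\in\Z}1/((m+a)(m+b))=(\pi\cot(\pi a)-\pi\cot(\pi b))/(b-a)$, into exactly the $-\ze(p)$ corrections attached to the cotangent terms; setting the total residue sum to zero and rearranging gives \eqref{para2-harmonic-double-sum}, with the $k=0$ contribution absorbed by the convention $\ze(0)=-1/2$. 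The main obstacle is the coefficient extraction at the order-$(p+1)$ poles combined with the regularization: several constituent series diverge individually — for even $p$ the $k=p/2$ term produces $\ze(1;a)$ — and are finite only as the paired differences $\ze(s;a)-\ze(s;b)$ or via the regularized value $\ze(1;a)=-(\psi(a)+\gamma)$, so the bookkeeping must keep the $a$- and $b$-sums coupled throughout rather than separated.
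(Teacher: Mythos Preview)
Your proposal is correct and follows essentially the same approach as the paper: both apply Lemma~\ref{lem-residue} to the kernel $\pi\cot(\pi z)\psi^{(p-1)}(-z)/(p-1)!$ against $r(z)=1/((z+a)(z+b))$, compute the residues at the integers and at $-a,-b$ using the expansions of Lemma~\ref{lem-approach-psi}, and sum to zero. Your write-up is in fact slightly more explicit than the paper's in explaining how the stray $(-1)^p\ze(p)$ pieces from the integer residues are absorbed via the cotangent partial-fraction identity into the $-\ze(p)$ corrections on the right-hand side, a step the paper covers only by the phrase ``summing these four contributions yields the statement of the theorem.''
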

\begin{proof}
Applying the method of contour integration in \cite{Xu2017-JMAA} (similar to the proof in \cite[Thm. 2.4]{Xu2017-JMAA}), we need to consider the contour integral
\begin{align*}
\oint\limits_{\left( \infty  \right)}F(z)dz:=\oint\limits_{\left( \infty  \right)}\frac{\pi \cot(\pi z) \psi^{(p-1)}(-z)}{(p-1)!(z+a)(z+b)}dz,
\end{align*}
where $\oint_{\left( \infty  \right)}$ denotes integratl along large circles, that is, the limit of integrals $\oint_{\left| z \right| = R}$ as $R\to \infty$.
Clearly, ${\pi \cot(\pi z)\psi^{(p-1)}(-z)}/(p-1)!$ is a kernel function. Hence, $\oint_{\left( \infty  \right)} F(z)dz=0$.
Note that the function in the contour integration only have poles at $z=n$ and $-a,-b$. Applying Lemma \ref{lem-approach-psi}, we can find that, at a nonnegative integer $n$, the pole has order $p+1$. Moreover, by a direct calculation, for $n\in \N_0$, we have
\begin{align*}
F(z)\overset{z\rightarrow n}{=}\frac{1}{(z-n)^{p+1}}\frac{1-2\sum\limits_{1\leq k\leq [p/2]}
\zeta(2k)(z-n)^{2k}+\left((-1)^{p}\zeta(p)+H_{n}^{(p)}\right)(z-n)^{p}+o((z-n)^{p})}{(z+a)(z+b)}
\end{align*}
and the residue is
\begin{align*}
\text{Res}(F(z),n)&=\lim_{z\to n}\frac{1}{p!}\frac{d^{p}}{dz^{p}}\left\{(z-n)^{p+1} F(z)\right\}\\
&=2\frac{(-1)^p}{a-b}\sum_{k=0}^{[p/2]}\ze(2k)\left(\frac{1}{(n+a)^{p-2k+1}}-\frac{1}{(n+b)^{p-2k+1}}\right)+\frac{(-1)^p\ze(p)+H_n^{(p)}}{(n+a)(n+b)}.
\end{align*}
At a negative integer $-n$ and complexes $-a,-b$, the poles are simple and residues are
\begin{align*}
&\text{Res}(F(z),-n)=(-1)^p \frac{\ze(p)-H_{n-1}^{(p)}}{(n-a)(n-b)},\\
&\text{Res}(F(z),-a)=(-1)^p \frac{\pi \cot(\pi a)}{a-b} \ze(p;a),\\
&\text{Res}(F(z),-b)=(-1)^p \frac{\pi \cot(\pi b)}{b-a} \ze(p;b).
\end{align*}
Hence, using Lemma \ref{lem-residue}, we have
\begin{align*}
\sum_{n=0}^\infty \text{Res}(F(z),n)+\sum_{n=1}^\infty \text{Res}(F(z),-n)+\text{Res}(F(z),-a)+\text{Res}(F(z),-b)=0.
\end{align*}
Summing these four contributions yields the statement of the theorem.
\end{proof}

\begin{cor}\emph{(cf. \cite{BBD2008,Xu2017-JMAA})} For $a\notin \Z$ and $m\in \N_0$,
\begin{align}\label{for-para-harm-num-hur-ze}
\sum\limits_{n=1}^{\infty}\frac{H_{n}^{(2m+1)}}{n^{2}-a^{2}}
&=\frac{1}{2}\sum\limits_{n=1}^{\infty}\frac{1}{n^{2m+1}(n^{2}-a^{2})}
+\frac{1}{2a}\sum\limits_{k=0}^{m}\zeta(2k)\left(\zeta(2m-2k+2;a)-\zeta(2m-2k+2;-a)\right)\nonumber\\
&\quad+\frac{1}{4a}\pi \cot(\pi a)\Big(\zeta(2m+1;a)+\zeta(2m+1;-a)-2\zeta(2m+1)\Big).
\end{align}
\end{cor}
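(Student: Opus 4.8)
The plan is to derive this corollary directly from Theorem \ref{thm-para2-harmonic-double-sum} by specializing $p=2m+1$ and $b=-a$. Since $a\notin\Z$ forces $a\neq 0$ and $-a\notin\Z$, the hypotheses $a\neq b$ and $a,b\notin\Z$ of the theorem are satisfied, so the substitution is legitimate. With $p=2m+1$ odd we have $(-1)^p=-1$ and $[p/2]=m$, and since $\cot(\pi(-a))=-\cot(\pi a)$, the two cotangent contributions on the right-hand side will recombine with a common factor.

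First I would rewrite the left-hand side of \eqref{para2-harmonic-double-sum}. Setting $b=-a$, both products become $(n+a)(n-a)=n^2-a^2$, and with $(-1)^p=-1$ the minus sign in front of the second sum turns into a plus, producing
\[\sum_{n=1}^\infty\frac{H_n^{(2m+1)}}{n^2-a^2}+\sum_{n=1}^\infty\frac{H_{n-1}^{(2m+1)}}{n^2-a^2}.\]
The key algebraic step is to invoke the recurrence $H_n^{(2m+1)}=H_{n-1}^{(2m+1)}+1/n^{2m+1}$, which gives $H_n^{(2m+1)}+H_{n-1}^{(2m+1)}=2H_n^{(2m+1)}-1/n^{2m+1}$. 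Hence the left-hand side equals $2\sum_{n\geq1}H_n^{(2m+1)}/(n^2-a^2)-\sum_{n\geq1}1/(n^{2m+1}(n^2-a^2))$.

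Next I would simplify the right-hand side. With $b-a=-2a$, $(-1)^p=-1$ and $p-2k+1=2m-2k+2$, the first sum collapses to $\frac1a\sum_{k=0}^m\zeta(2k)(\zeta(2m-2k+2;a)-\zeta(2m-2k+2;-a))$. For the boundary term, using $\cot(\pi(-a))=-\cot(\pi a)$ the two pieces acquire the common factor $\pi\cot(\pi a)$ and combine to $\frac1{2a}\pi\cot(\pi a)(\zeta(2m+1;a)+\zeta(2m+1;-a)-2\zeta(2m+1))$. Equating the two sides and dividing by $2$ yields exactly \eqref{for-para-harm-num-hur-ze}.

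Since every step is an explicit specialization followed by a one-line rearrangement, there is no genuine analytic obstacle here; the only point demanding care is the sign bookkeeping, namely tracking how $(-1)^p=-1$ interacts with the odd parity of the cotangent so that the two cotangent terms add rather than cancel. A useful independent check is to confirm that the $k=0$ summand, where $\zeta(0)=-1/2$, is treated consistently on both sides.
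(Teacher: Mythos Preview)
Your proposal is correct and follows exactly the same approach as the paper: the paper's proof is the one-line statement ``Setting $b=-a$ and $p=2m+1$ in \eqref{para2-harmonic-double-sum} yields the desired result,'' and you have simply written out the accompanying sign bookkeeping and the rearrangement $H_n^{(2m+1)}+H_{n-1}^{(2m+1)}=2H_n^{(2m+1)}-n^{-(2m+1)}$ explicitly.
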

\begin{proof}
Setting $b=-a$ and $p=2m+1$ in \eqref{para2-harmonic-double-sum} yields the desired result.
\end{proof}

\begin{cor} For integer $m\geq 0$ and $a\notin\Z$ with $a\neq 1/2$,
\begin{align}\label{para1-harmonic-double-sum}
\sum_{n=1}^\infty \frac{H_n^{(2m+1)}}{(n+a)(n+1-a)}&=\frac{1}{2a-1} \sum_{k=0}^m \ze(2k)\Big(\ze(2m-2k+2;a)-\ze(2m-2k+2;1-a)\Big)\nonumber\\
&\quad+\frac{\pi \cot(\pi a)}{2(2a-1)} \Big(\ze(2m+1;a)+\ze(2m+1;1-a)-2\ze(2m+1)\Big).
\end{align}
\end{cor}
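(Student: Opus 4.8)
The plan is to obtain this identity as a direct specialization of Theorem~\ref{thm-para2-harmonic-double-sum} with the choices $b = 1-a$ and $p = 2m+1$; note that the hypothesis $a \neq 1/2$ guarantees $a \neq b$, while $a \notin \Z$ forces $1-a \notin \Z$, so all conditions of the theorem are met. Since $p$ is odd we have $(-1)^p = -1$ and $[p/2] = m$, so the left-hand side of \eqref{para2-harmonic-double-sum} reads
\begin{align*}
\sum_{n=1}^\infty \frac{H_n^{(2m+1)}}{(n+a)(n+1-a)} + \sum_{n=1}^\infty \frac{H_{n-1}^{(2m+1)}}{(n-a)(n-1+a)}.
\end{align*}

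The one genuinely non-mechanical step is to recognize that the second sum coincides with the first. Reindexing by $j = n-1$ turns it into $\sum_{j=0}^\infty H_j^{(2m+1)}/\big((j+a)(j+1-a)\big)$, because $(n-a)(n-1+a) = (j+1-a)(j+a)$; since $H_0^{(2m+1)} = 0$ the $j=0$ term vanishes and the series becomes identical to the first. Thus the whole left-hand side collapses to exactly twice the target sum. This collapse is precisely what the symmetry of the pair $(a, 1-a)$ buys us, and it is the heart of the argument.

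For the right-hand side I would substitute $b = 1-a$ throughout, using $b - a = 1 - 2a = -(2a-1)$ so that $(-1)^p/(b-a) = 1/(2a-1)$, together with the argument simplification $p - 2k + 1 = 2m - 2k + 2$ in the zeta sum. The only subtlety is the cotangent term: the reflection $\cot(\pi(1-a)) = -\cot(\pi a)$ converts the antisymmetric difference $\pi\cot(\pi a)(\cdots) - \pi\cot(\pi(1-a))(\cdots)$ into the symmetric combination $\pi\cot(\pi a)\big(\ze(2m+1;a) + \ze(2m+1;1-a) - 2\ze(2m+1)\big)$. After these substitutions the right-hand side equals twice the expression claimed in \eqref{para1-harmonic-double-sum}.

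Equating the two sides and dividing by $2$ then yields the corollary. The main obstacle is really just bookkeeping---matching the index $p - 2k + 1$ to $2m - 2k + 2$ and tracking the cotangent sign flip---since all the analytic content (kernel estimates, residue computations, convergence) has already been discharged in the proof of Theorem~\ref{thm-para2-harmonic-double-sum}.
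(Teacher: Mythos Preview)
Your proof is correct and follows exactly the paper's own approach: the paper's proof consists of the single sentence ``Setting $b=1-a$ and $p=2m+1$ in \eqref{para2-harmonic-double-sum} yields the desired result,'' and you have simply spelled out the reindexing and the cotangent reflection $\cot(\pi(1-a))=-\cot(\pi a)$ that make this specialization work.
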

\begin{proof}
Setting $b=1-a$ and $p=2m+1$ in \eqref{para2-harmonic-double-sum} yields the desired result.
\end{proof}

It is obvious that upon differentiating both members of \eqref{para2-harmonic-double-sum} $k-1$ times with
respect to $a$ and $l-1$ times with respect to $b$, we obtain an explicit evaluation of the combined series
\[\sum_{n=1}^\infty \frac{H_n^{(p)}}{(n+a)^k(n+b)^{l}}-(-1)^{p+k+l}\sum_{n=1}^\infty \frac{H_{n-1}^{(p)}}{(n-a)^k(n-b)^l}\quad (k,l\geq 1).\]
For example, upon differentiating both members of \eqref{para2-harmonic-double-sum} $1$ times with
respect to $a$ and $1$ times with respect to $b$, and noting the facts that $\psi^{(j)}(a)=(-1)^{j+1}j!\zeta(j+1;a)$ and $\ze(1;a):=-(\psi(a)+\gamma)$, by a direct calculation,
we deduce
\begin{align}
&\sum_{n=1}^\infty \frac{H_n}{(n+a)^2(n+b)^2}+\sum_{n=1}^\infty \frac{H_{n-1}}{(n-a)^2(n-b)^2}\nonumber\\
&=-\frac{2 (-\psi ^{(1)}(a)-\pi  \cot (\pi  a) (\psi(a)+\gamma )+\psi ^{(1)}(b)+\pi  \cot (\pi  b) (\psi(b)+\gamma ))}{(a-b)^3}\nonumber\\&\quad-\frac{\psi ^{(2)}(a)+\pi  \cot (\pi  a) \psi ^{(1)}(a)-\pi ^2 \csc ^2(\pi  a) (\psi (a)+\gamma )}{(a-b)^2}\nonumber\\&\quad-\frac{\psi ^{(2)}(b)+\pi  \cot (\pi  b) \psi ^{(1)}(b)-\pi ^2 \csc ^2(\pi  b) (\psi(b)+\gamma )}{(a-b)^2}.
\end{align}
Then setting $b=1-a$ yields the following evaluation
\begin{align}
&\sum_{n=1}^\infty \frac{H_n}{(n+a)^2(n+1-a)^2}\nonumber\\
&=\frac{-\psi ^{(1)}(1-a)+\psi ^{(1)}(a)+\pi  \cot (\pi  a) (\psi(1-a)+\gamma)+\pi  \cot (\pi  a) (\psi (a)+\gamma)}{(2 a-1)^3}\nonumber\\
&\quad-\frac{\psi ^{(2)}(1-a)-\pi  \cot (\pi  a) \psi ^{(1)}(1-a)-\pi ^2 \csc ^2(\pi  a) (\psi (1-a)+\gamma )}{2(2 a-1)^2}\nonumber\\
&\quad-\frac{\psi ^{(2)}(a)+\pi  \cot (\pi  a) \psi ^{(1)}(a)-\pi ^2 \csc ^2(\pi  a) (\psi (a)+\gamma )}{2(2 a-1)^2}.
\end{align}

More general, we can get the following general theorem.
\begin{thm}\label{thm-para1-ration-funct-residue} For $p\in \N$,
\begin{align}\label{para1-ration-funct-residue}
&\sum_{n=0}^\infty \Big((-1)^p\zeta(p)+H_n^{(p)}\Big)r(n)+(-1)^p\sum_{n=1}^\infty \Big(\zeta(p)-H_{n-1}^{(p)}\Big)r(-n)\nonumber\\
&-2\sum_{k=0}^{[p/2]} \frac1{(p-2k)!} \ze(2k) \sum_{n=0}^\infty r^{(p-2k)}(n)+\sum_{\beta\in S}{\rm Res}(f(z),\beta)=0,
\end{align}
where $\ze(0)$ and $\ze(1)$ should be interpreted as $-1/2$ and $0$ wherever they
occur. $r^{(p)}(n)$ is defined as the $p$-st derivative of $r(z)$ with $z=n$, $r(z)$ is a rational function which is $O(z^{-2})$ at infinity, $S$ is the set of poles of $r(z)$ and any integers $n$ are not poles of $r(z)$. The function $f(z)$ is defined by
\[f(z):=\frac{\pi \cot(\pi z) \psi^{(p-1)}(-z)}{(p-1)!}r(z).\]
\end{thm}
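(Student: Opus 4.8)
The plan is to mimic the contour-integration proof of Theorem~\ref{thm-para2-harmonic-double-sum}, but now keep the rational function $r(z)$ general rather than specializing to $1/[(z+a)(z+b)]$. First I would fix $p\in\N$ and form the kernel $f(z)=\pi\cot(\pi z)\,\psi^{(p-1)}(-z)/(p-1)!\cdot r(z)$. Since $\pi\cot(\pi z)\,\psi^{(p-1)}(-z)/(p-1)!$ is a kernel function in the sense of Lemma~\ref{lem-residue} (it is a product of cotangent and polygamma, as recorded by Flajolet and Salvy), and since $r(z)=O(z^{-2})$ at infinity, the hypotheses of Lemma~\ref{lem-residue} are met and the total sum of residues vanishes. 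The poles split into three groups: the nonnegative integers $n\in\N_0$ (where $\pi\cot(\pi z)$ contributes a simple pole and $\psi^{(p-1)}(-z)$ a pole of order $p$, so $f$ has a pole of order $p+1$ assuming $n$ is not a pole of $r$), the negative integers $-n$ with $n\in\N$ (where $\psi^{(p-1)}(-z)$ is regular, $\pi\cot(\pi z)$ is simple, so the pole is simple), and finally the poles $\beta\in S$ of $r(z)$ itself.

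Next I would compute each residue class using Lemma~\ref{lem-approach-psi}. At a nonnegative integer $n$ the work is exactly the order-$(p+1)$ residue extraction already carried out in Theorem~\ref{thm-para2-harmonic-double-sum}, except that instead of expanding the specific factor $1/[(z+a)(z+b)]$ one Taylor-expands the generic analytic factor $r(z)$ about $z=n$. Combining the local expansions
\[
\pi\cot(\pi z)\overset{z\to n}{=}\frac1{z-n}-2\sum_{k\ge1}\zeta(2k)(z-n)^{2k-1},\qquad
\frac{\psi^{(p-1)}(-z)}{(p-1)!}\overset{z\to n}{=}\frac1{(z-n)^p}\Big(1+\big((-1)^p\zeta(p)+H_n^{(p)}\big)(z-n)^p+\cdots\Big),
\]
with $r(z)=\sum_{j\ge0} r^{(j)}(n)(z-n)^j/j!$, the coefficient of $(z-n)^{-1}$ produces the term $\big((-1)^p\zeta(p)+H_n^{(p)}\big)r(n)$ together with the sum $-2\sum_{k=0}^{[p/2]}\zeta(2k)\,r^{(p-2k)}(n)/(p-2k)!$; here the $\zeta(0)=-1/2$ convention absorbs the $k=0$ contribution coming from the leading $1$ in the polygamma expansion. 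Summing over $n\in\N_0$ gives the first and third terms of \eqref{para1-ration-funct-residue}. At a negative integer $-n$ the residue is simple and reads $(-1)^p\big(\zeta(p)-H_{n-1}^{(p)}\big)r(-n)$ by the fourth formula of Lemma~\ref{lem-approach-psi}; summing over $n\in\N$ gives the second term. The residues at $\beta\in S$ are left unevaluated as the abstract sum $\sum_{\beta\in S}\Res(f(z),\beta)$.

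Finally, assembling all four groups and invoking Lemma~\ref{lem-residue} yields the vanishing of the total, which is precisely \eqref{para1-ration-funct-residue}. I expect the main obstacle to be the bookkeeping in the order-$(p+1)$ residue at the nonnegative integers: one must correctly multiply the three Laurent/Taylor series and isolate the single coefficient of $(z-n)^{-1}$, tracking how the even-zeta terms pair the $(z-n)^{2k-1}$ from the cotangent with the $(z-n)^{p-2k}$ derivative term from $r$, and verifying that only $r^{(p-2k)}$ survives for $0\le k\le[p/2]$ (higher $k$ would require $r^{(p-2k)}$ with negative order and drop out). Once this combinatorial identity for the $(z-n)^{-1}$ coefficient is established in the same manner as in Theorem~\ref{thm-para2-harmonic-double-sum}, the rest is a direct transcription, and the convergence needed to apply Lemma~\ref{lem-residue} follows from $r(z)=O(z^{-2})$ exactly as before.
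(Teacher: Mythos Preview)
Your proposal is correct and follows essentially the same approach as the paper's own proof: both apply Lemma~\ref{lem-residue} to the contour integral $\oint_{(\infty)} f(z)\,dz=0$, compute the residues at nonnegative integers by multiplying the local expansions of $\pi\cot(\pi z)$, $\psi^{(p-1)}(-z)/(p-1)!$, and the Taylor series of $r(z)$ (with the $\zeta(0)=-1/2$ convention absorbing the $k=0$ term), read off the simple residues at negative integers from the fourth expansion in Lemma~\ref{lem-approach-psi}, and leave the $\beta\in S$ residues abstract. The paper's version is terser---it simply quotes the residue formulas by analogy with Theorem~\ref{thm-para2-harmonic-double-sum}---whereas you spell out the series bookkeeping, but the substance is identical.
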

\begin{proof} Considering the contour integral
\begin{align*}
\oint\limits_{\left( \infty  \right)}f(z)dz=0.
\end{align*}
By a similar argument as in the proof of Theorem \ref{thm-para2-harmonic-double-sum}, we obtain
\begin{align*}
&\text{Res}(f(z),n)=-2\sum_{k=0}^{[p/2]}\ze(2k)\frac{r^{(p-2k)}(n)}{(p-2k)!}+\Big((-1)^p\ze(p)+H_n^{(p)}\Big)r(n)\quad (n\in \N_0),\\
&\text{Res}(f(z),-n)=(-1)^p\Big(\zeta(p)-H_{n-1}^{(p)}\Big)r(-n)\quad (n\in \N).
\end{align*}
Hence, applying Lemma \ref{lem-residue}, we have
\begin{align*}
\sum_{n=0}^\infty \text{Res}(f(z),n)+\sum_{n=1}^\infty \text{Res}(f(z),-n)+\sum_{\beta\in S}{\rm Res}(f(z),\beta)=0.
\end{align*}
Thus, combining related identities yields the desired result.
\end{proof}

It is clear that the Theorem \ref{thm-para2-harmonic-double-sum} follows immediately from Theorem \ref{thm-para1-ration-funct-residue} if we set $r(z)=1/((z+a)(z+b))$.

\begin{thm}\label{thm-para2-alter-harmonic-double-sum} For positive integer $p$ and complexes $a,b$ with $a\neq b$ and $a,b\notin \Z$, we have
\begin{align}\label{para2-alter-harmonic-double-sum}
&\sum_{n=1}^\infty \frac{H_n^{(p)}}{(n+a)(n+b)}(-1)^n-(-1)^p \sum_{n=1}^\infty \frac{H_{n-1}^{(p)}}{(n-a)(n-b)}(-1)^n\nonumber\\
&=2\frac{(-1)^p}{b-a} \sum_{k=0}^{[p/2]} {\bar \ze}(2k)\Big({\bar \ze}(p-2k+1;b)-{\bar \ze}(p-2k+1;a)\Big)\nonumber\\
&\quad+\frac{(-1)^p}{b-a}\pi \left( \frac{\ze(p;a)-\ze(p)}{\sin(\pi a)}-\frac{\ze(p;b)-\ze(p)}{\sin(\pi b)}\right),
\end{align}
where ${\bar \ze}(0):=1/2$ and ${\bar \ze}(s;a)$ is \emph{alternating Hurwitz zeta function} defined by ($a\notin \N_0^-$)
\[{\bar \ze}(s;a):=\sum_{n=0}^\infty \frac{(-1)^n}{(n+a)^s}\quad (\Re(s)\geq 1).\]
\end{thm}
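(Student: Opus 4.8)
The plan is to mirror the proof of Theorem \ref{thm-para2-harmonic-double-sum}, replacing the cotangent kernel by the cosecant kernel so that a factor $(-1)^n$ is generated at every integer pole. Concretely, I would apply Lemma \ref{lem-residue} to
\[G(z):=\frac{\pi}{\sin(\pi z)}\,\frac{\psi^{(p-1)}(-z)}{(p-1)!}\,\frac{1}{(z+a)(z+b)},\]
which is a product of a cosecant and a polygamma (hence a kernel function in the sense of Flajolet--Salvy) times a rational function that is $O(z^{-2})$ at infinity; therefore $\oint_{(\infty)}G(z)\,dz=0$. The poles are the integers together with the simple poles $z=-a,-b$.

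First I would expand $G$ at a nonnegative integer $n$. Feeding the cosecant expansion (second line of Lemma \ref{lem-approach-psi}) and the polygamma expansion (third line) into the product, the relevant part of $G$ near $z=n$ becomes
\[\frac{(-1)^n\left(1+2\sum_{1\le k\le[p/2]}\bar\ze(2k)(z-n)^{2k}+\left((-1)^p\ze(p)+H_n^{(p)}\right)(z-n)^p+o((z-n)^p)\right)}{(z-n)^{p+1}(z+a)(z+b)}.\]
Note the plus sign before the $\bar\ze$ sum, in contrast to the $-\ze$ sum of the cotangent case. Extracting the residue with $\frac{1}{(z+a)(z+b)}=\frac{1}{b-a}\left(\frac{1}{z+a}-\frac{1}{z+b}\right)$ gives
\[\Res(G,n)=(-1)^n\left[\frac{2(-1)^p}{b-a}\sum_{k=0}^{[p/2]}\bar\ze(2k)\left(\frac{1}{(n+a)^{p-2k+1}}-\frac{1}{(n+b)^{p-2k+1}}\right)+\frac{(-1)^p\ze(p)+H_n^{(p)}}{(n+a)(n+b)}\right].\]
At a negative integer $-n$ the polygamma is regular, so only the cosecant contributes a simple pole of residue $(-1)^n$; using the fourth line of Lemma \ref{lem-approach-psi} one finds $\Res(G,-n)=(-1)^p(-1)^n\frac{\ze(p)-H_{n-1}^{(p)}}{(n-a)(n-b)}$. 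At $z=-a,-b$ the poles are simple and, via $\psi^{(p-1)}(a)/(p-1)!=(-1)^p\ze(p;a)$, one gets $\Res(G,-a)=-\frac{(-1)^p\pi\,\ze(p;a)}{(b-a)\sin(\pi a)}$ and symmetrically for $-b$. Since $\sum_{n\ge0}(-1)^n/(n+a)^{p-2k+1}=\bar\ze(p-2k+1;a)$, summing the integer residues already produces the $\bar\ze(2k)\bar\ze$ double sum of the second line and the two harmonic-number series that constitute the left-hand side of \eqref{para2-alter-harmonic-double-sum}.

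The only genuinely new computation, and the step I expect to be the main obstacle, is reconciling the $\ze(p)$-weighted pieces. The constant $(-1)^p\ze(p)$ coming out of the polygamma is \emph{not} alternating, yet it is multiplied by the cosecant factor $(-1)^n$; collecting it from both the nonnegative and the negative integer poles leaves the series $(-1)^p\ze(p)\left(\sum_{n\ge0}\frac{(-1)^n}{(n+a)(n+b)}+\sum_{n\ge1}\frac{(-1)^n}{(n-a)(n-b)}\right)$. To match the clean form of the statement I would evaluate this in closed form via partial fractions together with the classical identity $\frac{\pi}{\sin(\pi a)}=\sum_{n=-\infty}^{\infty}\frac{(-1)^n}{n+a}$, obtaining $\frac{\pi}{b-a}\left(\frac{1}{\sin(\pi a)}-\frac{1}{\sin(\pi b)}\right)$. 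This is precisely the term that converts the $\frac{\ze(p;a)}{\sin(\pi a)}$ and $\frac{\ze(p;b)}{\sin(\pi b)}$ coming from the $-a,-b$ residues into the combinations $\frac{\ze(p;a)-\ze(p)}{\sin(\pi a)}$ and $\frac{\ze(p;b)-\ze(p)}{\sin(\pi b)}$ appearing in the last line.

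Finally I would invoke Lemma \ref{lem-residue} to set the sum of all residues to zero, move the two harmonic-number series to the left, and collect the remaining contributions. The boundary conventions are handled exactly as in Lemma \ref{lem-approach-psi}: the term $k=0$ uses $\bar\ze(0)=1/2$ to supply the leading coefficient of the cosecant expansion, and for $p=1$ the divergent $\ze(1;a)$ is regularized by $\ze(1;a)=-(\psi(a)+\gamma)$, consistent with replacing $\psi(-z)$ by $\psi(-z)+\gamma$ in the kernel. Rearranging then yields \eqref{para2-alter-harmonic-double-sum}.
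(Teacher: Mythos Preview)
Your proposal is correct and follows essentially the same route as the paper's own proof: the same kernel $G(z)=\dfrac{\pi}{\sin(\pi z)}\dfrac{\psi^{(p-1)}(-z)}{(p-1)!}\dfrac{1}{(z+a)(z+b)}$, the same residue computations at $n\ge0$, $-n$, $-a$, $-b$, and the same closed-form evaluation
\[
\sum_{n=0}^\infty \frac{(-1)^n}{(n+a)(n+b)}+\sum_{n=1}^\infty \frac{(-1)^n}{(n-a)(n-b)}=\frac{\pi}{b-a}\left(\frac{1}{\sin(\pi a)}-\frac{1}{\sin(\pi b)}\right)
\]
to absorb the $\ze(p)$-weighted piece into the $\dfrac{\ze(p;a)-\ze(p)}{\sin(\pi a)}$ term. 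The paper simply states this last identity as a ``fact'' without the partial-fraction/Mittag--Leffler justification you sketch, but otherwise the arguments coincide.
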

\begin{proof}
The proof of Theorem \ref{thm-para2-alter-harmonic-double-sum} is similar to the proof of Theorem \ref{thm-para2-harmonic-double-sum}, we need to consider the contour integral
\begin{align*}
\oint\limits_{\left( \infty  \right)}G(z)dz:=\oint\limits_{\left( \infty  \right)}\frac{\pi  \psi^{(p-1)}(-z)}{(p-1)!\sin(\pi z)(z+a)(z+b)}dz.
\end{align*}
Clearly, ${\pi \psi^{(p-1)}(-z)}/((p-1)!\sin(\pi z))$ is a kernel function. Hence, $\oint_{\left( \infty  \right)} G(z)dz=0$.
Note that the function in the contour integral only have poles at $z=n$ and $-a,-b$. Applying Lemma \ref{lem-approach-psi}, we can find that, at a nonnegative integer $n$, the pole has order $p+1$. Moreover, by a direct calculation, for $n\in \N_0$, we have
\begin{align*}
G(z)\overset{z\rightarrow n}{=}\frac{(-1)^n}{(z-n)^{p+1}}\frac{1+2\sum\limits_{1\leq k\leq [p/2]}
{\bar \zeta}(2k)(z-n)^{2k}+\left((-1)^{p}\zeta(p)+H_{n}^{(p)}\right)(z-n)^{p}+o((z-n)^{p})}{(z+a)(z+b)}
\end{align*}
and the residue is
\begin{align*}
\text{Res}(G(z),n)&=\lim_{z\to n}\frac{1}{p!}\frac{d^{p}}{dz^{p}}\left\{(z-n)^{p+1} G(z)\right\}\\
&=2\frac{(-1)^p}{b-a}\sum_{k=0}^{[p/2]}{\bar \ze}(2k)\left(\frac{(-1)^n}{(n+a)^{p-2k+1}}-\frac{(-1)^n}{(n+b)^{p-2k+1}}\right)+\frac{(-1)^p\ze(p)+H_n^{(p)}}{(n+a)(n+b)}(-1)^n.
\end{align*}
At a negative integer $-n$ and complexes $-a,-b$, the poles are simple and residues are
\begin{align*}
&\text{Res}(G(z),-n)=(-1)^p \frac{\ze(p)-H_{n-1}^{(p)}}{(n-a)(n-b)}(-1)^n,\\
&\text{Res}(G(z),-a)=(-1)^p \frac{\pi \ze(p;a)}{\sin(\pi a)(a-b)} ,\\
&\text{Res}(G(z),-b)=(-1)^p \frac{\pi \ze(p;b)}{\sin(\pi b)(b-a)}.
\end{align*}
Hence, using Lemma \ref{lem-residue}, we have
\begin{align*}
\sum_{n=0}^\infty \text{Res}(G(z),n)+\sum_{n=1}^\infty \text{Res}(G(z),-n)+\text{Res}(G(z),-a)+\text{Res}(G(z),-b)=0.
\end{align*}
Noting the fact that
\begin{align*}
\sum_{n=0}^\infty \frac{(-1)^n}{(n+a)(n+b)}+\sum_{n=1}^\infty \frac{(-1)^n}{(n-a)(n-b)}=\frac{\pi}{b-a} \left(\frac1{\sin (\pi a)}-\frac1{\sin (\pi b)}\right).
\end{align*}
Summing these four contributions yields the statement of the theorem.
\end{proof}

\begin{cor} For $a\notin \Z$ and $m\in \N_0$,
\begin{align}\label{for-para-alter-harm-num-hur-ze}
\sum\limits_{n=1}^{\infty}\frac{H_{n}^{(2m+1)}}{n^{2}-a^{2}}(-1)^n
&=\frac{1}{2}\sum\limits_{n=1}^{\infty}\frac{(-1)^n}{n^{2m+1}(n^{2}-a^{2})}\nonumber\\
&\quad+\frac{1}{2a}\sum\limits_{k=0}^{m}{\bar \zeta}(2k)\left({\bar \zeta}(2m-2k+2;-a)-{\bar \zeta}(2m-2k+2;a)\right)\nonumber\\
&\quad+\frac{\pi}{4a \sin(\pi a)}\Big(\zeta(2m+1;a)+\zeta(2m+1;-a)-2\zeta(2m+1)\Big).
\end{align}
\end{cor}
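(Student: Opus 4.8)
The plan is to derive \eqref{for-para-alter-harm-num-hur-ze} as a direct specialization of Theorem \ref{thm-para2-alter-harmonic-double-sum}, in complete analogy with the way \eqref{for-para-harm-num-hur-ze} was obtained from Theorem \ref{thm-para2-harmonic-double-sum}. Concretely, I would set $b=-a$ and $p=2m+1$ in \eqref{para2-alter-harmonic-double-sum}. Because $a\notin\Z$ in particular forces $a\neq 0$, the admissibility condition $a\neq b$ of the theorem is respected, and both denominators on the left collapse to the same quantity, $(n+a)(n+b)=(n-a)(n-b)=n^2-a^2$.

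The next step handles the left-hand side. Since $p=2m+1$ is odd we have $(-1)^p=-1$, so the subtraction $-(-1)^p(\cdots)$ turns into an addition and the two series, now carrying the common denominator $n^2-a^2$ and the common factor $(-1)^n$, can be merged. Invoking $H_{n-1}^{(2m+1)}=H_n^{(2m+1)}-1/n^{2m+1}$ gives $H_n^{(2m+1)}+H_{n-1}^{(2m+1)}=2H_n^{(2m+1)}-1/n^{2m+1}$, whence the left-hand side becomes $2\sum_{n\ge 1}H_n^{(2m+1)}(-1)^n/(n^2-a^2)-\sum_{n\ge 1}(-1)^n/(n^{2m+1}(n^2-a^2))$.

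For the right-hand side I would substitute $b-a=-2a$, note $[p/2]=m$ and $p-2k+1=2m-2k+2$, and in the cosecant block use the oddness $\sin(\pi b)=\sin(-\pi a)=-\sin(\pi a)$. The prefactor $\frac{(-1)^p}{b-a}=\frac{-1}{-2a}=\frac{1}{2a}$ absorbs the signs; the sign flip in $\sin(-\pi a)$ makes the two cosecant terms add rather than cancel, giving $\frac{\pi}{2a\sin(\pi a)}\big(\zeta(2m+1;a)+\zeta(2m+1;-a)-2\zeta(2m+1)\big)$, while the $\bar\zeta$-sum becomes $\frac1a\sum_{k=0}^m\bar\zeta(2k)\big(\bar\zeta(2m-2k+2;-a)-\bar\zeta(2m-2k+2;a)\big)$. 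Finally I would transpose the spurious $\sum(-1)^n/(n^{2m+1}(n^2-a^2))$ term to the right and divide the whole identity by $2$, isolating $\sum_{n\ge 1}H_n^{(2m+1)}(-1)^n/(n^2-a^2)$ and reproducing \eqref{for-para-alter-harm-num-hur-ze} with its coefficients $\tfrac12$, $\tfrac1{2a}$, and $\tfrac{\pi}{4a\sin(\pi a)}$.

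There is no genuine obstacle here: this is a bookkeeping specialization rather than a new computation. The only thing demanding care is sign tracking. One must simultaneously exploit $(-1)^p=-1$, which is what lets the left-hand subtraction become a sum so the harmonic numbers telescope into $2H_n^{(2m+1)}-1/n^{2m+1}$, and the parity relation $\sin(-\pi a)=-\sin(\pi a)$, which is what makes the two cosecant contributions reinforce instead of canceling. A quick check of the $\tfrac12$ factor that survives the final division confirms consistency with the stated formula.
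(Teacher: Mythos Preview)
Your proposal is correct and follows exactly the paper's approach: the paper's proof consists of the single sentence ``Setting $b=-a$ and $p=2m+1$ in \eqref{para2-alter-harmonic-double-sum} yields the desired result.'' Your write-up simply spells out the sign bookkeeping that the paper leaves implicit.
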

\begin{proof}
Setting $b=-a$ and $p=2m+1$ in \eqref{para2-alter-harmonic-double-sum} yields the desired result.
\end{proof}

\begin{cor} For integer $m\geq 1$ and $a\notin\Z$ with $a\neq 1/2$,
\begin{align}\label{para1-alter-harmonic-double-sum}
\sum_{n=1}^\infty \frac{H_n^{(2m)}}{(n+a)(n+1-a)}(-1)^n&=\frac{1}{1-2a} \sum_{k=0}^m {\bar \ze}(2k)\Big({\bar \ze}(2m-2k+2;1-a)-{\bar \ze}(2m-2k+2;a)\Big)\nonumber\\
&\quad+\frac{\pi}{2(1-2a)} \frac{\ze(2m;a)-\ze(2m;1-a)}{\sin(\pi a)}.
\end{align}
\end{cor}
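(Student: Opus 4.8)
The plan is to obtain \eqref{para1-alter-harmonic-double-sum} as a specialization of Theorem \ref{thm-para2-alter-harmonic-double-sum}, setting $b=1-a$ and $p=2m$. Under $b=1-a$ the hypotheses of the theorem translate cleanly: the condition $a\notin\Z$ already forces $1-a\notin\Z$, while $a\neq b$ becomes exactly the excluded value $a\neq 1/2$; the requirement $m\geq 1$ just guarantees $p=2m\geq 2$, so that $H_n^{(2m)}$ is a genuine generalized harmonic number and the series converges absolutely. With this substitution the two denominators $(n+a)(n+b)$ and $(n-a)(n-b)$ become $(n+a)(n+1-a)$ and $(n-a)(n-1+a)$, and the two series on the left of \eqref{para2-alter-harmonic-double-sum} are no longer independent.

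The substantive step is to collapse these two series into one. In the second sum I would reindex by $n\mapsto n+1$: writing $j=n-1$ turns $\sum_{n\geq 1} H_{n-1}^{(2m)}(-1)^n/((n-a)(n-1+a))$ into $-\sum_{j\geq 0} H_j^{(2m)}(-1)^j/((j+a)(j+1-a))$, where the overall minus sign comes from $(-1)^{j+1}$ and the $j=0$ term drops out because $H_0^{(2m)}=0$. Hence the second series equals $-\sum_{n\geq 1} H_n^{(2m)}(-1)^n/((n+a)(n+1-a))$, i.e. the negative of the first. Since $p=2m$ is even we have $(-1)^p=1$, so the left-hand side of \eqref{para2-alter-harmonic-double-sum} equals $\big(1+(-1)^p\big)$ times a single series, namely $2\sum_{n\geq 1} H_n^{(2m)}(-1)^n/((n+a)(n+1-a))$. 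This is precisely why the alternating corollary carries an even index $2m$, in contrast to the odd index $2m+1$ forced in the non-alternating Corollary \eqref{para1-harmonic-double-sum}: there the analogous reindexing produces $+$ the first series, so the even case would collapse to $0$ and only the odd case survives.

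On the right-hand side I would insert $b-a=1-2a$, $(-1)^{2m}=1$, and $[p/2]=m$, and then use the reflection $\sin(\pi(1-a))=\sin(\pi a)$. This reflection is the key simplification in the cosecant term: it makes the two copies of $\ze(2m)/\sin(\cdot)$ cancel, leaving $\frac{\pi}{1-2a}\cdot\frac{\ze(2m;a)-\ze(2m;1-a)}{\sin(\pi a)}$, while the ${\bar\ze}$-sum contracts to a single sum over $k=0,\dots,m$ of ${\bar\ze}(2k)$ times a difference of alternating Hurwitz values at $1-a$ and $a$. Dividing the whole identity by $2$ then yields \eqref{para1-alter-harmonic-double-sum}. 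The only point demanding care is the sign bookkeeping in the reindexing: the weight $(-1)^n$ flips to $-(-1)^n$, and it is the interplay of this flip with the factor $-(-1)^p$ in the theorem that turns the combination into an addition (for even $p$) rather than a cancellation; everything after that is routine.
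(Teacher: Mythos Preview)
Your proposal is correct and follows exactly the paper's approach: the paper's own proof is the one-liner ``Setting $b=1-a$ and $p=2m$ in \eqref{para2-alter-harmonic-double-sum} yields the desired result,'' and you have simply spelled out the reindexing $n\mapsto n+1$ and the reflection $\sin(\pi(1-a))=\sin(\pi a)$ that make that substitution work.
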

\begin{proof}
Setting $b=1-a$ and $p=2m$ in \eqref{para2-alter-harmonic-double-sum} yields the desired result.
\end{proof}

Similar to Theorem \ref{thm-para1-ration-funct-residue} , we can get the following general theorem.
\begin{thm}\label{thm-alter-para1-ration-funct-residue} For $p\in \N$,
\begin{align}\label{a;lter-para1-ration-funct-residue}
&\sum_{n=0}^\infty \Big((-1)^p\zeta(p)+H_n^{(p)}\Big)(-1)^nr(n)+(-1)^p\sum_{n=1}^\infty \Big(\zeta(p)-H_{n-1}^{(p)}\Big)(-1)^nr(-n)\nonumber\\
&+2\sum_{k=0}^{[p/2]} \frac1{(p-2k)!} {\bar \ze}(2k) \sum_{n=0}^\infty (-1)^nr^{(p-2k)}(n)+\sum_{\beta\in T}{\rm Res}(g(z),\beta)=0,
\end{align}
where $\ze(0)$ and $\ze(1)$ should be interpreted as $-1/2$ and $0$ wherever they
occur. $r^{(p)}(n)$ is defined as the $p$-st derivative of $r(z)$ with $z=n$, $r(z)$ is a rational function which is $O(z^{-2})$ at infinity, $T$ is the set of poles of $r(z)$ and any integers $n$ are not poles of $r(z)$. The function $g(z)$ is defined by
\[g(z):=\frac{\pi  \psi^{(p-1)}(-z)}{(p-1)!\sin(\pi z)}r(z).\]
\end{thm}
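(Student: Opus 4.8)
\emph{Proof proposal.} The plan is to repeat the residue computation of Theorem \ref{thm-para1-ration-funct-residue} verbatim, with the cosecant kernel replacing the cotangent kernel, exactly as Theorem \ref{thm-para2-alter-harmonic-double-sum} was deduced from Theorem \ref{thm-para2-harmonic-double-sum}. First I would note that $\pi\psi^{(p-1)}(-z)/((p-1)!\sin(\pi z))$ is a product of a cosecant and a polygamma, hence a kernel function in the sense of Flajolet and Salvy; since $r(z)=O(z^{-2})$ at infinity, the function $g(z)$ satisfies the hypotheses of Lemma \ref{lem-residue}, so $\oint_{(\infty)}g(z)\,dz=0$ and the total sum of residues vanishes.

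Next I would classify the poles of $g(z)$. As $r(z)$ has no integer poles, the set $T$ of its poles is disjoint from $\Z$. At a nonnegative integer $n$ the factor $\psi^{(p-1)}(-z)$ contributes a pole of order $p$ and $1/\sin(\pi z)$ a simple pole, so $g$ has a pole of order $p+1$; at a negative integer $-n$ with $n\in\N$ only $1/\sin(\pi z)$ is singular, giving a simple pole. To evaluate $\Res(g(z),n)$ I would multiply the two local expansions supplied by Lemma \ref{lem-approach-psi} --- the cosecant expansion $(-1)^n\big(\tfrac1{z-n}+2\sum_{k\geq1}\bar\ze(2k)(z-n)^{2k-1}\big)$ and the polygamma expansion whose numerator is $1+\cdots+\big((-1)^p\ze(p)+H_n^{(p)}\big)(z-n)^p+o((z-n)^p)$ --- then multiply by $r(z)=\sum_{j\geq0}r^{(j)}(n)(z-n)^j/j!$ and read off the coefficient of $(z-n)^p$. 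This produces $\Res(g(z),n)=(-1)^n\big[2\sum_{k=0}^{[p/2]}\bar\ze(2k)r^{(p-2k)}(n)/(p-2k)!+\big((-1)^p\ze(p)+H_n^{(p)}\big)r(n)\big]$, where the $k=0$ term absorbs the bare $r^{(p)}(n)/p!$ precisely because $\bar\ze(0)=1/2$.

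At the simple poles $-n$ the polygamma factor is regular, and its value is the leading term of the last expansion in Lemma \ref{lem-approach-psi}, so $\Res(g(z),-n)=(-1)^p(-1)^n\big(\ze(p)-H_{n-1}^{(p)}\big)r(-n)$. Summing the three families of residues --- over $n\in\N_0$, over $-n$ with $n\in\N$, and over $\beta\in T$ --- and invoking Lemma \ref{lem-residue} yields \eqref{a;lter-para1-ration-funct-residue} directly. The $+2\sum_k$ sign (in contrast to the $-2\sum_k$ of Theorem \ref{thm-para1-ration-funct-residue}) reflects both $\bar\ze(0)=1/2$ versus $\ze(0)=-1/2$ and the $+2\bar\ze(2k)$ cosecant coefficients versus the $-2\ze(2k)$ cotangent ones; the conventions $\ze(0)=-1/2$, $\ze(1)=0$ carry over unchanged, and for $p=1$ one uses $\psi(-z)+\gamma$ as prescribed in Lemma \ref{lem-approach-psi}.

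The only genuinely delicate step is the coefficient extraction at the order-$(p+1)$ poles, where the polygamma and cosecant series must be carried to order $(z-n)^p$ and the surviving cross terms tracked. Because $r(z)=O(z^{-2})$ forces $r^{(j)}(z)=O(z^{-2-j})$, every resulting series (including those weighted by $(-1)^n$) converges absolutely, so no conditional-convergence issue arises, and the growth of the kernel on the expanding contours is the standard cosecant estimate already used for Theorem \ref{thm-para2-alter-harmonic-double-sum}. Thus the proof is a direct transcription with no obstacle beyond this bookkeeping.
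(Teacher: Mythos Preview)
Your proposal is correct and follows essentially the same approach as the paper: you apply Lemma~\ref{lem-residue} to the cosecant kernel $g(z)$, compute $\Res(g(z),n)$ for $n\in\N_0$ and $\Res(g(z),-n)$ for $n\in\N$ via the expansions of Lemma~\ref{lem-approach-psi}, and sum together with the residues at $\beta\in T$. Your residue formulas coincide exactly with those in the paper's proof, and the additional remarks on the sign change, the $\bar\ze(0)=1/2$ convention, and convergence are valid supplementary detail.
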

\begin{proof} Considering the contour integral
\begin{align*}
\oint\limits_{\left( \infty  \right)}g(z)dz=0.
\end{align*}
By a similar argument as in the proof of Theorem \ref{thm-para2-alter-harmonic-double-sum}, we obtain
\begin{align*}
&\text{Res}(g(z),n)=2\sum_{k=0}^{[p/2]}{\bar \ze}(2k)\frac{r^{(p-2k)}(n)}{(p-2k)!}(-1)^n+\Big((-1)^p\ze(p)+H_n^{(p)}\Big)(-1)^nr(n)\quad (n\in \N_0),\\
&\text{Res}(g(z),-n)=(-1)^p\Big(\zeta(p)-H_{n-1}^{(p)}\Big)(-1)^nr(-n)\quad (n\in \N).
\end{align*}
Hence, applying Lemma \ref{lem-residue}, we have
\begin{align*}
\sum_{n=0}^\infty \text{Res}(g(z),n)+\sum_{n=1}^\infty \text{Res}(g(z),-n)+\sum_{\beta\in T}{\rm Res}(g(z),\beta)=0.
\end{align*}
Thus, combining related identities yields the desired result.
\end{proof}

Hence, Theorem \ref{thm-para2-alter-harmonic-double-sum} follows immediately from Theorem \ref{thm-alter-para1-ration-funct-residue} if we set $r(z)=1/((z+a)(z+b))$.

In \cite{BBD2008}, Borweins and Bradley used the evaluation \eqref{for-para-harm-num-hur-ze} to obtain an explicit formula of double zeta values $\zeta(2j,2m+1)\ (j\in \N,m\in \N_0)$ by using power series expansions and comparing the coefficients on both sides. Similarly, applying \eqref{for-para-alter-harm-num-hur-ze}, we can also get the following corollary.
\begin{cor} For $j\in \N$ and $m\in \N_0$,
\begin{align}\label{double-alter-zeta-values}
\zeta({\overline{2j}},2m+1)&=\frac1{2}{\bar \ze}(2m+2j+1)-\sum_{k=0}^m \binom{2j+2m-2k}{2j-1}{\bar \ze}(2k){\bar \ze}(2m+2j+1-2k)\nonumber\\
&\quad+\sum_{l=0}^{j-1} \binom{2j+2m-2l}{2m}{\bar \ze}(2l)\ze(2m+2j+1-2l),
\end{align}
where $\zeta({\overline{2j}},2m+1)$ is a alternating double zeta values defined by
\begin{align}
\zeta({\overline{2j}},2m+1):=\sum_{n>k\geq 1} \frac{(-1)^n}{n^{2j}k^{2m+1}}=\sum_{n=1}^\infty \frac{H_{n-1}^{(2m+1)}}{n^{2j}}(-1)^n.
\end{align}
\end{cor}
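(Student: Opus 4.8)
The plan is to read off the claimed identity from the alternating corollary \eqref{for-para-alter-harm-num-hur-ze} by expanding both sides as power series in $a$ about $a=0$ (convergent for $0<|a|<1$, away from the integer poles) and comparing the coefficients of $a^{2j-2}$, exactly mirroring the way Borwein--Bradley derived $\zeta(2j,2m+1)$ from \eqref{for-para-harm-num-hur-ze} in the non-alternating setting.

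First I would treat the left-hand side of \eqref{for-para-alter-harm-num-hur-ze}. Using the geometric expansion $\frac{1}{n^2-a^2}=\sum_{j\ge1}a^{2j-2}/n^{2j}$ together with the splitting $H_n^{(2m+1)}=H_{n-1}^{(2m+1)}+n^{-(2m+1)}$, the coefficient of $a^{2j-2}$ on the left becomes $\sum_{n\ge1}H_{n-1}^{(2m+1)}(-1)^n/n^{2j}+\sum_{n\ge1}(-1)^n/n^{2m+2j+1}$, which by the stated definition of $\zeta(\overline{2j},2m+1)$ equals $\zeta(\overline{2j},2m+1)-\bar\zeta(2m+2j+1)$. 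The same geometric expansion applied to the first term on the right contributes exactly $-\tfrac12\bar\zeta(2m+2j+1)$ to the coefficient of $a^{2j-2}$.

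Next I would expand the two remaining terms on the right. The essential inputs are the Laurent data already encoded in Lemma \ref{lem-approach-psi} at $n=0$, namely $\pi/\sin(\pi a)=2\sum_{l\ge0}\bar\zeta(2l)a^{2l-1}$ with $\bar\zeta(0)=1/2$, and the binomial expansions of $\zeta(s;\pm a)$ and $\bar\zeta(s;\pm a)$ obtained by writing $(n\pm a)^{-s}=n^{-s}\sum_{i\ge0}\binom{-s}{i}(\pm a/n)^i$ and summing in $n$. The decisive feature is a parity cancellation. In the difference $\bar\zeta(s;-a)-\bar\zeta(s;a)$ with $s=2m-2k+2$ \emph{even}, the singular $a^{-s}$ terms cancel and only odd powers of $a$ survive; dividing by $2a$ and reading off the coefficient of $a^{2j-2}$ (that is, the index $i=2j-1$) gives $-\binom{2m-2k+2j}{2j-1}\bar\zeta(2m+2j+1-2k)$, and summing against $\bar\zeta(2k)$ reproduces the middle sum of \eqref{double-alter-zeta-values}. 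Dually, in the sum $\zeta(s;a)+\zeta(s;-a)$ with $s=2m+1$ \emph{odd} the singular terms cancel and only even powers survive, yielding $\zeta(2m+1;a)+\zeta(2m+1;-a)-2\zeta(2m+1)=2\sum_{l\ge1}\binom{2m+2l}{2l}a^{2l}\zeta(2m+2l+1)$.

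Finally I would multiply this last even-power series by $\frac{\pi}{4a\sin(\pi a)}=\tfrac12\sum_{l'\ge0}\bar\zeta(2l')a^{2l'-2}$, collect the coefficient of $a^{2j-2}$ (the contributions with $l+l'=j$, $l\ge1$), and use the symmetry $\binom{2m+2l}{2l}=\binom{2m+2l}{2m}$ to put it in the form $\sum_{l=0}^{j-1}\binom{2j+2m-2l}{2m}\bar\zeta(2l)\zeta(2m+2j+1-2l)$, matching the last sum of \eqref{double-alter-zeta-values}. Equating the coefficients of $a^{2j-2}$ on both sides and transposing the $-\bar\zeta(2m+2j+1)$ coming from the left leaves the leading term $\tfrac12\bar\zeta(2m+2j+1)$, which is precisely the asserted identity. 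The main obstacle is the bookkeeping of the two Laurent/Taylor expansions and the rearrangement of the resulting double sums into a single power series in $a$: one must justify the interchange $\sum_n\sum_i$ by absolute convergence for $|a|<1$ and, above all, keep precise track of the parity of the summation index so that only the odd powers survive in the even-$s$ difference and only the even powers in the odd-$s$ sum. Once the parities and the binomial-symmetry rewriting are in place, the remaining step is a routine matching of coefficients.
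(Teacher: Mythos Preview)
Your proposal is correct and follows essentially the same route as the paper: both proofs expand each term of \eqref{for-para-alter-harm-num-hur-ze} as a power series in $a$ (using the geometric expansion of $1/(n^2-a^2)$, the Laurent series of $\pi/\sin(\pi a)$, and the binomial expansions of $\zeta(s;\pm a)$ and $\bar\zeta(s;\pm a)$) and then equate the coefficients of $a^{2j-2}$. The paper records the same five expansions you list and assembles them into a single power-series identity before reading off the coefficient, so the only difference is expository---you make the splitting $H_n^{(2m+1)}=H_{n-1}^{(2m+1)}+n^{-(2m+1)}$ and the parity bookkeeping more explicit than the paper does.
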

\begin{proof}
By direct calculations, for $|a|<1$, we have
\begin{align*}
&\sum_{n=1}^\infty \frac{H_{n-1}^{(2m+1)}}{n^2-a^2}(-1)^n=\sum_{j=1}^\infty \zeta(\overline{2j},2m+1)a^{2j-2},\\
&\sum_{n=1}^\infty \frac{(-1)^n}{n^{2m+1}(n^2-a^2)}=-\sum_{j=1}^\infty {\bar \ze}(2m+2j+1)a^{2j-2},\\
&\frac{\pi}{\sin(\pi a)}=\frac1{a}+2a\sum_{j=1}^\infty {\bar \ze}(2j)a^{2j-2},\\
&\ze(2m+1;a)+\ze(2m+1;1-a)-2\ze(2m+1)\\&\quad=2\sum_{j=1}^\infty \binom{2j+2m}{2j}\ze(2m+2j+1)a^{2j},\\
&{\bar \ze}(2m-2k+2;-a)-{\bar \ze}(2m-2k+2;a)\\&\quad=-2\sum_{j=1}^\infty \binom{2j+2m-2k}{2j-1}{\bar \ze}(2m+2j+1-2k)a^{2j-1}.
\end{align*}
Then, using \eqref{for-para-alter-harm-num-hur-ze} gives
\begin{align*}
&\sum_{j=1}^\infty \ze(\overline{2j},2m+1)a^{2j-2}\\&=\frac1{2} \sum_{j=1}^\infty {\bar \ze}(2m+2j+1)a^{2j-2}-\sum_{j=1}^\infty \left\{\sum_{k=0}^m \binom{2j+2m-2k}{2j-1}{\bar \ze}(2k){\bar \ze}(2m+2j+1-2k)\right\}a^{2j-2}\\
&\quad+\frac1{2} \sum_{j=1}^\infty \binom{2j+2m}{2j} \ze(2m+2j+1)a^{2j-2}+\sum_{j=1}^\infty \sum_{j_1+j_2=j,\atop j_1,j_2\geq 1} \binom{2j_2+2m}{2j_2}{\bar \ze}(2j_1)\ze(2m+2j_2+1)a^{2j-2}.
\end{align*}
Thus, comparing the coefficients of $a^{2j-2}$ in above identity yields the desired evaluation.
\end{proof}

\subsection{Quadratic Parametric Euler Sums}
Now, we give some evaluations of quadratic parametric Euler sums by using the method of contour integral.

\begin{thm}\label{thm-quadratic-para-ES} For $p,m\in \N$ and $a,b\notin \Z$ with $a\neq b$,
\begin{align}\label{Formu-quadratic-para-ES}
&\sum\limits_{n=1}^{\infty}\frac{H_{n}^{(p)}H_{n}^{(m)}}{(n+a)(n+b)}
+(-1)^{p+m}\sum\limits_{n=1}^{\infty}\frac{H_{n-1}^{(p)}H_{n-1}^{(m)}}{(n-a)(n-b)}\nonumber\\
&+(-1)^{m}\zeta(m)\sum\limits_{n=1}^{\infty}\frac{H_{n}^{(p)}}{(n+a)(n+b)}
+(-1)^{p}\zeta(p)\sum\limits_{n=1}^{\infty}\frac{H_{n}^{(m)}}{(n+a)(n+b)}\nonumber\\
&-(-1)^{p+m}\zeta(m)\sum\limits_{n=1}^{\infty}\frac{H_{n-1}^{(p)}}{(n-a)(n-b)}
-(-1)^{p+m}\zeta(p)\sum\limits_{n=1}^{\infty}\frac{H_{n-1}^{(m)}}{(n-a)(n-b)}\nonumber\\
&+(-1)^{p+m}\frac{\pi \cot(\pi b)}{b-a}\left\{\zeta(p;b)\zeta(m;b)-\zeta(p)\zeta(m)\right\}\nonumber\\
&-(-1)^{p+m}\frac{\pi \cot(\pi a)}{b-a}\left\{\zeta(p;a)\zeta(m;a)-\zeta(p)\zeta(b)\right\}\nonumber\\
&-2\frac{(-1)^{p+m}}{b-a}\sum\limits_{k=0}^{[(p+m)/{2}]}\zeta(2k)
\left\{\zeta(p+m-2k+1;a)-\zeta(p+m-2k+1;b)\right\}\nonumber\\
&+2\frac{(-1)^{p+m}}{b-a}
\sum\limits_{2k_{1}+k_{2}\leq m+1,\atop k_{1}\geq 0,k_{2}\geq 1}
(-1)^{k_{2}}\binom{k_{2}+p-2}{p-1}\zeta(2k_{1})\nonumber\\&\quad\quad\quad\quad\quad\quad\quad\times
\left\{ \begin{array}{l}
\zeta(k_{2}+p-1)\left[\zeta(m-2k_{1}-k_{2}+2;a)-\zeta(m-2k_{1}-k_{2}+2;b)\right]\nonumber\\
-(-1)^{p+k_{2}}\sum\limits_{n=0}^{\infty}
\left[\frac{H_{n}^{(k_{2}+p-1)}}{(n+a)^{m-2k_{1}-k_{2}+2}}-\frac{H_{n}^{(k_{2}+p-1)}}{(n+b)^{m-2k_{1}-k_{2}+2}}\right]\nonumber\\
 \end{array} \right\}\\
&+2\frac{(-1)^{p+m}}{b-a}\sum\limits_{2k_{1}+k_{2}\leq p+1,\atop k_{1}\geq 0,k_{2}\geq 1}
(-1)^{k_{2}}\binom{k_{2}+m-2}{m-1}\zeta(2k_{1})\nonumber\\&\quad\quad\quad\quad\quad\quad\quad\times
\left\{ \begin{array}{l}
\zeta(k_{2}+m-1)\left[\zeta(p-2k_{1}-k_{2}+2;a)-\zeta(p-2k_{1}-k_{2}+2;b)\right]\nonumber\\
-(-1)^{m+k_{2}}\sum\limits_{n=0}^{\infty}
\left[\frac{H_{n}^{(k_{2}+m-1)}}{(n+a)^{p-2k_{1}-k_{2}+2}}-\frac{H_{n}^{(k_{2}+m-1)}}{(n+b)^{p-2k_{1}-k_{2}+2}}\right]\end{array} \right\}\nonumber\\
&=0,
\end{align}
where $\zeta \left(1\right)$ and $\zeta(0)$ should be interpreted as $0$ and $-1/2$, respectively, wherever they occur. $\ze(1;a):=-(\psi(a)+\gamma)$.
\end{thm}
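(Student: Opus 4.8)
The plan is to prove \eqref{Formu-quadratic-para-ES} as a direct application of the residue Lemma \ref{lem-residue}, inserting \emph{two} polygamma factors into the Flajolet--Salvy kernel. Concretely, I would consider
\[
\oint\limits_{(\infty)} H(z)\,dz := \oint\limits_{(\infty)} \frac{\pi\cot(\pi z)\,\psi^{(p-1)}(-z)\,\psi^{(m-1)}(-z)}{(p-1)!(m-1)!\,(z+a)(z+b)}\,dz .
\]
Since $\pi\cot(\pi z)\psi^{(p-1)}(-z)\psi^{(m-1)}(-z)/\big((p-1)!(m-1)!\big)$ is a polynomial form in cotangent and polygamma functions, it is again a kernel function, whence $\oint_{(\infty)}H(z)\,dz=0$ and Lemma \ref{lem-residue} forces the sum of all residues of $H$ to vanish. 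The poles sit at the nonnegative integers $n$ (where $\cot$, $\psi^{(p-1)}(-z)$, $\psi^{(m-1)}(-z)$ contribute orders $1,p,m$, so the order is $p+m+1$), at the negative integers $-n$, and at $z=-a,-b$ (all simple, as $a,b\notin\Z$).

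The heart of the matter is $\Res(H,n)$. Writing $w=z-n$ and invoking Lemma \ref{lem-approach-psi}, I would set
\[
C(w)=1-2\sum_{k\ge1}\zeta(2k)w^{2k},\qquad P(w)=1+\sum_{k\ge1}\binom{k+p-2}{p-1}\big((-1)^p\zeta(k+p-1)-(-1)^kH_n^{(k+p-1)}\big)w^{k+p-1},
\]
with $M(w)$ the analogue for $m$, so that $\Res(H,n)$ is the coefficient of $w^{p+m}$ in $C(w)P(w)M(w)/\big((z+a)(z+b)\big)$. Factoring $P(w)M(w)=1+P_{>}(w)+M_{>}(w)+P_{>}(w)M_{>}(w)$, where $P_{>},M_{>}$ gather the terms of degree $\ge p,\ge m$, splits the extraction into four clean pieces. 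The ``both-trivial'' piece returns the $\zeta(2k)$-block; the doubly nontrivial piece $P_{>}M_{>}$, of minimal degree exactly $p+m$, contributes only the single product $\big((-1)^p\zeta(p)+H_n^{(p)}\big)\big((-1)^m\zeta(m)+H_n^{(m)}\big)/\big((n+a)(n+b)\big)$, which expands into the $H_n^{(p)}H_n^{(m)}$ term and the three cross terms carrying $\zeta(p),\zeta(m),\zeta(p)\zeta(m)$; and the two mixed pieces $P_{>}\cdot1$ and $1\cdot M_{>}$ produce precisely the two binomial-coefficient blocks (indices $k_2+p-1$ and $k_2+m-1$), after expanding $1/\big((z+a)(z+b)\big)=\tfrac1{b-a}\sum_{j\ge0}(-1)^jw^j\big((n+a)^{-j-1}-(n+b)^{-j-1}\big)$ and reading off the required power of $w$.

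For the remaining poles I would record that $\psi^{(p-1)}(-z)/(p-1)!$ is analytic at $z=-n$ with value $(-1)^p(\zeta(p)-H_{n-1}^{(p)})$, and at $z=-a$ with value $(-1)^p\zeta(p;a)$. Thus $\Res(H,-n)=(-1)^{p+m}(\zeta(p)-H_{n-1}^{(p)})(\zeta(m)-H_{n-1}^{(m)})/\big((n-a)(n-b)\big)$, giving the $H_{n-1}^{(p)}H_{n-1}^{(m)}$ term with its cross terms, while $\Res(H,-a)+\Res(H,-b)=\frac{(-1)^{p+m}}{b-a}\big(\pi\cot(\pi b)\zeta(p;b)\zeta(m;b)-\pi\cot(\pi a)\zeta(p;a)\zeta(m;a)\big)$. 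The final assembly rests on the elementary identity
\[
\sum_{n=0}^\infty\frac{1}{(n+a)(n+b)}+\sum_{n=1}^\infty\frac{1}{(n-a)(n-b)}=\frac{\pi\cot(\pi a)-\pi\cot(\pi b)}{b-a},
\]
which I would derive from the bilateral cotangent partial fraction; pairing the leftover constant $\zeta(p)\zeta(m)$ pieces from the $n$- and $(-n)$-sums against it converts the bare $\pi\cot(\pi a)\zeta(p;a)\zeta(m;a)$ residues into the braces $\zeta(p;a)\zeta(m;a)-\zeta(p)\zeta(m)$ recorded in the statement. Summing all contributions and applying Lemma \ref{lem-residue} yields the asserted vanishing. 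I expect the main obstacle to be the coefficient bookkeeping in the triple product $C(w)P(w)M(w)$, in particular tracking the signs and the binomial factors $\binom{k_2+p-2}{p-1}$, $\binom{k_2+m-2}{m-1}$ through the two mixed pieces and matching them, index by index, to the two blocks in \eqref{Formu-quadratic-para-ES}.
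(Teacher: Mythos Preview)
Your proposal is correct and follows essentially the same approach as the paper: both apply Lemma~\ref{lem-residue} to the kernel $H(z)=\pi\cot(\pi z)\psi^{(p-1)}(-z)\psi^{(m-1)}(-z)/\big((p-1)!(m-1)!(z+a)(z+b)\big)$, expand locally at $n\ge0$ via Lemma~\ref{lem-approach-psi} to extract the order-$(p+m+1)$ residue, and combine with the simple residues at $-n,-a,-b$. Your $P_{>}/M_{>}$ bookkeeping and explicit use of the cotangent partial-fraction identity to absorb the $\zeta(p)\zeta(m)$ constants are just a slightly more structured rendering of the same computation the paper carries out directly.
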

\begin{proof}
 Similarly to the proof of Theorem \ref{thm-para2-harmonic-double-sum}, we consider the contour integral
\begin{align*}
\oint\limits_{\left( \infty  \right)}H(z)dz:=\oint\limits_{\left( \infty  \right)}\frac{\pi \cot(\pi z)\psi^{(p-1)}(-z)\psi^{(m-1)}(-z)}{(z+a)(z+b)(p-1)!(m-1)!}dz=0.
\end{align*}
Observe that $H(z)$ has poles only at $-a,-b$ and the integers. Applying Lemma \ref{lem-approach-psi}, we can find that, at a nonnegative integer $n$, the pole has order $p+m+1$. Moreover, by a straightforward calculation, for $n\in \N_0$, we have
\begin{align*}
&H(z)\overset{z\rightarrow n}{=}\frac{1}{(z-n)^{p+m+1}}\frac{1}{(z+a)(z+b)}\\
&\quad\quad\times\left\{\begin{array}{l}
1-2\sum\limits_{k=1}^{[(p+m)/{2}]}\zeta(2k)(z-n)^{2k}\\+\sum\limits_{k=1}^{m+1}\binom{k+p-2}{p-1}\left[(-1)^{p}\zeta(k+p-1)-(-1)^{k}H_{n}^{(k+p-1)}\right](z-n)^{k+p-1}\\
-2\sum\limits_{2k_{1}+k_{2}\leq m+1,\atop k_{1},k_{2}\geq 1}
\binom{k_{2}+p-2}{p-1}\zeta(2k_{1})\left[(-1)^{p}\zeta(k_{2}+p-1)-(-1)^{k_{2}}H_{n}^{(k_{2}+p-1)}\right]\\ \quad\quad\quad\quad\quad\quad\quad\quad\quad\times(z-n)^{2k_{1}+k_{2}+p-1}\\
+\sum\limits_{k=1}^{p+1}\binom{k+m-2}{m-1}
\left[(-1)^{m}\zeta(k+m-1)-(-1)^{k}H_{n}^{(k+m-1)}\right](z-n)^{k+m-1}\\
-2\sum\limits_{2k_{1}+k_{2}\leq p+1,\atop k_{1},k_{2}\geq 1}
\binom{k_{2}+m-2}{m-1}\zeta(2k_{1})\left[(-1)^{m}\zeta(k_{2}+m-1)-(-1)^{k_{2}}H_{n}^{(k_{2}+m-1)}\right]\\ \quad\quad\quad\quad\quad\quad\quad\quad\quad\times(z-n)^{2k_{1}+k_{2}+m-1}\\
+\left[(-1)^{p}\zeta(p)+H_{n}^{(p)}\right]\left[(-1)^{m}\zeta(m)+H_{n}^{(m)}\right]
(z-n)^{p+m}+o((z-n)^{p+m})\end{array}
\right\}
\end{align*}
and the residue is
\begin{align*}
&\text{Res}[H(z),n]=\frac{1}{(p+m)!}\lim_{z\to n}\frac{d^{p+m}}{dz^{p+m}}\left\{(z-n)^{p+m+1}H(z)\right\}\\
&=\frac{(-1)^{p+m}}{b-a}\left\{\frac{1}{(n+a)^{p+m+1}}-\frac{1}{(n+b)^{p+m+1}}\right\}\\
&\quad-2\frac{(-1)^{p+m}}{b-a}\sum\limits_{k=1}^{[(p+m)/{2}]}\zeta(2k)\left\{\frac{1}{(n+a)^{p+m-2k+1}}-\frac{1}{(n+b)^{p+m-2k+1}}\right\}\\
&\quad-\frac{(-1)^{p+m}}{b-a}\sum\limits_{k=1}^{m+1}
\binom{k+p-2}{p-1}\left[(-1)^{k}\zeta(k+p-1)-(-1)^{p}H_{n}^{(k+p-1)}\right]\\
&\quad\quad\quad\quad\quad\quad\quad\times\left[\frac{1}{(n+a)^{m-k+2}}-\frac{1}{(n+b)^{m-k+2}}\right]\\
&\quad+2\frac{(-1)^{p+m}}{b-a}
\sum\limits_{2k_{1}+k_{2}\leq m+1,\atop k_{1},k_{2}\geq 1}
\binom{k_{2}+p-2}{p-1}(-1)^{2k_{1}+k_{2}}\zeta(2k_{1})\\
&\quad\quad\quad\times
\left[\zeta(k_{2}+p-1)-(-1)^{p+k_{2}}H_{n}^{(k_{2}+p-1)}\right]\left[\frac{1}{(n+a)^{m-2k_{1}-k_{2}+2}}-\frac{1}{(n+b)^{m-2k_{1}-k_{2}+2}}\right]\\
&\quad-\frac{(-1)^{p+m}}{b-a}\sum\limits_{k=1}^{p+1}\binom{k+m-2}{m-1}\left[(-1)^{k}\zeta(k+m-1)-(-1)^{m}H_n^{(k+m-1)}\right]
\\&\quad\quad\quad\quad\quad\quad\quad\times\left[\frac{1}{(n+a)^{p-k+2}}-\frac{1}{(n+b)^{p-k+2}}\right]\\
&\quad+2\frac{(-1)^{p+m}}{b-a}
\sum\limits_{2k_{1}+k_{2}\leq m+1,\atop k_{1},k_{2}\geq 1}
\binom{k_{2}+m-2}{m-1}(-1)^{2k_{1}+k_{2}}\zeta(2k_{1})\\
&\quad\quad\quad\times
\left[\zeta(k_{2}+m-1)-(-1)^{m+k_{2}}H_{n}^{(k_{2}+m-1)}\right]\left[\frac{1}{(n+a)^{p-2k_{1}-k_{2}+2}}-\frac{1}{(n+b)^{p-2k_{1}-k_{2}+2}}\right]\\
&\quad+\frac{(-1)^{p+m}\zeta(p)\zeta(m)+(-1)^{p}\zeta(p)H_{n}^{(m)}+(-1)^{m}\zeta(m)H_{n}^{(p)}+H_{n}^{(p)}H_{n}^{(m)}}{(n+a)(n+b)}.
\end{align*}
At a negative integer $-n$ and reals $-a,-b$, the poles are simple and residues are
\begin{align*}
&\text{Res}[H(z),-n]=(-1)^{p+m}\frac{\zeta(p)\zeta(m)-\zeta(p)H_{n-1}^{(m)}-\zeta(m)H_{n-1}^{(p)}+H_{n-1}^{(p)}H_{n-1}^{(m)}}{(n-a)(n-b)},\\
&\text{Res}[H(z),-a]=-(-1)^{p+m}\frac{\pi \cot(\pi a)}{b-a}\zeta(p;a)\zeta(m;a),\\
&\text{Res}[H(z),-b]=(-1)^{p+m}\frac{\pi \cot(\pi b)}{b-a}\zeta(p;b)\zeta(m;b).
\end{align*}
Hence, using Lemma \ref{lem-residue}, we have
\begin{align*}
\sum_{n=0}^\infty \text{Res}(H(z),n)+\sum_{n=1}^\infty \text{Res}(H(z),-n)+\text{Res}(H(z),-a)+\text{Res}(H(z),-b)=0.
\end{align*}
Summing these four contributions yields the statement of the theorem.
\end{proof}

Letting $(p,m)=(1,1)$ and $(2,2)$ in Theorem \ref{thm-quadratic-para-ES}, we can get the following corollaries.
\begin{cor}  For $a,b\notin \Z$ with $a\neq b$,
\begin{align}
&\sum\limits_{n=1}^{\infty}\frac{H_{n}^{2}}{(n+a)(n+b)}+\sum\limits_{n=1}^{\infty}\frac{H_{n-1}^{2}}{(n-a)(n-b)}
+\frac{\pi \cot(\pi b)}{b-a}\left(\psi(b)+\gamma\right)^{2}
-\frac{\pi \cot(\pi a)}{b-a}\left(\psi(a)+\gamma\right)^{2}\nonumber\\
&\quad-\frac{2}{b-a}
\left\{-\frac{\zeta(3;a)-\zeta(3;b)}{2}+\zeta(2)\left(\psi(b)-\psi(a)\right)\right\}\nonumber\\
&\quad-\frac{2}{b-a}\left\{\sum\limits_{n=1}^{\infty}\left[\frac{H_{n}}{(n+a)^{2}}-\frac{H_{n}}{(n+b)^{2}}\right]
+\zeta(2)\left(\psi(b)-\psi(a)\right)+\sum\limits_{n=0}^{\infty}\frac{H_{n}^{(2)}}{(n+a)(n+b)}(b-a)\right\}\nonumber\\
&=0.
\end{align}
\end{cor}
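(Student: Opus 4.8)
The plan is to obtain this corollary as a pure specialization of the master identity \eqref{Formu-quadratic-para-ES} of Theorem~\ref{thm-quadratic-para-ES} at $p=m=1$; no new contour integral is required. First I would substitute $p=m=1$ everywhere and record the resulting collapse of the sign factors and products, namely $(-1)^{p+m}=1$, $(-1)^p=(-1)^m=-1$, together with $H_n^{(p)}H_n^{(m)}=H_n^2$ and $H_{n-1}^{(p)}H_{n-1}^{(m)}=H_{n-1}^2$. This turns the first line of \eqref{Formu-quadratic-para-ES} directly into the two quadratic sums $\sum H_n^2/((n+a)(n+b))$ and $\sum H_{n-1}^2/((n-a)(n-b))$ that open the corollary.

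Next I would invoke the stated conventions $\zeta(1)=0$ and $\zeta(1;a)=-(\psi(a)+\gamma)$. Every summand of \eqref{Formu-quadratic-para-ES} carrying an explicit factor $\zeta(p)$ or $\zeta(m)$ now vanishes: this eliminates the four linear Euler-sum terms on the second and third lines, and it strips the $\zeta(p)\zeta(m)$ pieces from the two cotangent residues, leaving $\frac{\pi\cot(\pi b)}{b-a}(\psi(b)+\gamma)^2-\frac{\pi\cot(\pi a)}{b-a}(\psi(a)+\gamma)^2$. For the single finite sum on the sixth line the range contracts to $0\le k\le[(p+m)/2]=1$; the $k=0$ term uses $\zeta(0)=-1/2$ to give $-\tfrac12(\zeta(3;a)-\zeta(3;b))$, while the $k=1$ term uses $\zeta(1;a)-\zeta(1;b)=\psi(b)-\psi(a)$ to give $\zeta(2)(\psi(b)-\psi(a))$, reproducing the first braced quantity of the statement.

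The substantive part is the final two, mutually symmetric, blocks. Here I would first note that the constraint $2k_1+k_2\le m+1=2$ with $k_1\ge0,\,k_2\ge1$ admits only $(k_1,k_2)\in\{(0,1),(0,2)\}$, and that the binomial reduces to $\binom{k_2+p-2}{p-1}=\binom{k_2-1}{0}=1$. Since $p=m$, the two blocks are identical, so their joint contribution is exactly twice that of one block. Evaluating the two surviving bracketed quantities (again using $\zeta(1)=0$ to annihilate the Hurwitz factor in the $(0,1)$ term and $\zeta(1;a)-\zeta(1;b)=\psi(b)-\psi(a)$ in the $(0,2)$ term) gives $-\sum_n[H_n/(n+a)^2-H_n/(n+b)^2]$ from $(0,1)$ and $\zeta(2)(\psi(b)-\psi(a))+\sum_n[H_n^{(2)}/(n+a)-H_n^{(2)}/(n+b)]$ from $(0,2)$; these are then weighted by the prefactors $(-1)^{k_2}\zeta(2k_1)$, which equal $\tfrac12$ and $-\tfrac12$ respectively.

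Finally I would fold the $H_n^{(2)}$ tail into the displayed form via the elementary partial-fraction identity $\frac{1}{n+a}-\frac{1}{n+b}=\frac{b-a}{(n+a)(n+b)}$, so that $\sum_n[H_n^{(2)}/(n+a)-H_n^{(2)}/(n+b)]=(b-a)\sum_n H_n^{(2)}/((n+a)(n+b))$; since $H_0=H_0^{(2)}=0$ the choice of starting index is immaterial, and doubling the single block produces precisely the last line $-\frac{2}{b-a}\{\cdots\}$ of the corollary. I expect the only genuine difficulty to be disciplined bookkeeping: carrying the signs $(-1)^{k_2}$ in step with the degenerate values $\zeta(0)=-1/2$, $\zeta(1)=0$, and $\zeta(1;a)=-(\psi(a)+\gamma)$, and confirming that the two $p\leftrightarrow m$ blocks truly coincide so that the overall factor $2$ is correct.
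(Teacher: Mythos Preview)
Your proposal is correct and follows exactly the approach the paper uses: the corollary is obtained simply by specializing Theorem~\ref{thm-quadratic-para-ES} at $p=m=1$. Your detailed bookkeeping---the collapse of the $\zeta(p),\zeta(m)$ terms via $\zeta(1)=0$, the identification $\zeta(1;a)=-(\psi(a)+\gamma)$ in the cotangent residues, the reduction of the $k$-sum to $k\in\{0,1\}$ and of the double sums to $(k_1,k_2)\in\{(0,1),(0,2)\}$, and the doubling from the $p\leftrightarrow m$ symmetry---is all accurate and matches the stated identity term by term.
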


\begin{cor}  For $a,b\notin \Z$ with $a\neq b$,
\begin{align}
&\sum\limits_{n=1}^{\infty}\frac{[H_{n}^{(2)}]^{2}}{(n+a)(n+b)}+\sum\limits_{n=1}^{\infty}\frac{[H_{n-1}^{(2)}]^2}{(n-a)(n-b)}\nonumber\\
&\quad+2\zeta(2)\sum\limits_{n=1}^{\infty}\frac{H_{n}^{(2)}}{(n+a)(n+b)}-2\zeta(2)\sum\limits_{n=1}^{\infty}\frac{H_{n-1}^{(2)}}{(n-a)(n-b)}\nonumber\\
&\quad+\frac{\pi \cot(\pi b)}{b-a}\left\{\zeta(2;b)^{2}-\zeta(2)^{2}\right\}
-\frac{\pi \cot(\pi a)}{b-a}\left\{\zeta(2;a)^{2}-\zeta(2)^{2}\right\}\nonumber\\
&\quad-\frac{2}{b-a}\left\{-\frac{\zeta(5;a)-\zeta(5;b)}{2}+\zeta(2)\left(\zeta(3;a)-\zeta(3;b)\right)+\zeta(4))\left(\psi(b)-\psi(a)\right)\right\}\nonumber\\
&\quad-\frac{2}{b-a}
\left\{ \begin{array}{l}
-\zeta(2)\left[\zeta(3;a)-\zeta(3;b)\right]-\sum\limits_{n=0}^{\infty}\left[\frac{H_{n}^{(2)}}{(n+a)^{3}}-\frac{H_{n}^{(2)}}{(n+b)^{3}}\right]\\
+2\zeta(3)\left[\zeta(2;a)-\zeta(2;b)\right]-2\sum\limits_{n=0}^{\infty}\left[\frac{H_{n}^{(3)}}{(n+a)^{2}}-\frac{H_{n}^{(3)}}{(n+b)^{2}}\right]\\
-3\zeta(4)\left(\psi(b)-\psi(a)\right)-3(b-a)
\sum\limits_{n=0}^{\infty}\frac{H_{n}^{(4)}}{(n+a)(n+b)}
\end{array} \right\}\nonumber\\
&\quad-\frac{4}{b-a}\zeta(2)\left\{\zeta(2)\left(\psi(b)-\psi(a)\right)+(b-a)\sum\limits_{n=0}^{\infty}\frac{H_{n}^{(2)}}{(n+a)(n+b)}\right\}\nonumber\\
&=0.
\end{align}
\end{cor}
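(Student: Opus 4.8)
The plan is to specialize the master identity \eqref{Formu-quadratic-para-ES} to $p=m=2$; no new analytic input is needed, so the argument is pure substitution and bookkeeping. First I would record that $(-1)^{p+m}=1$, that $H_n^{(p)}H_n^{(m)}=[H_n^{(2)}]^2$ and $H_{n-1}^{(p)}H_{n-1}^{(m)}=[H_{n-1}^{(2)}]^2$, and that $[(p+m)/2]=2$. The two single linear-Euler-sum terms carrying $(-1)^m\zeta(m)$ and $(-1)^p\zeta(p)$ then both collapse to $\zeta(2)\sum_n H_n^{(2)}/((n+a)(n+b))$ and merge into the displayed $2\zeta(2)$-coefficient; likewise the two $H_{n-1}^{(2)}$ terms merge with coefficient $-2\zeta(2)$. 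The two cotangent residue terms pass over verbatim to $\zeta(2;b)^2-\zeta(2)^2$ and $\zeta(2;a)^2-\zeta(2)^2$.

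Next I would evaluate the single $\zeta(2k)$-sum over $k=0,1,2$. Using the conventions $\zeta(0)=-1/2$ and $\zeta(1;a)=-(\psi(a)+\gamma)$ stated in the theorem, the $k=0$ term contributes $-\tfrac12(\zeta(5;a)-\zeta(5;b))$, the $k=1$ term contributes $\zeta(2)(\zeta(3;a)-\zeta(3;b))$, and the $k=2$ term, where the Hurwitz index degenerates to $1$, contributes $\zeta(4)(\psi(b)-\psi(a))$. Multiplied by the common prefactor $-2/(b-a)$, these reproduce the single-sum line of the corollary.

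The substantive step is the two double sums, here taken over $2k_1+k_2\le 3$ with $k_1\ge 0,\ k_2\ge 1$. Because $p=m$ the two sums are identical, so their total is twice a single copy; with $p=2$ the binomial $\binom{k_2+p-2}{p-1}=\binom{k_2}{1}=k_2$, and the admissible pairs are $(k_1,k_2)\in\{(0,1),(0,2),(0,3),(1,1)\}$, with Hurwitz exponent $4-2k_1-k_2\in\{3,2,1\}$. I would expand these four terms, again invoking $\zeta(0)=-1/2$ and $\zeta(1;a)=-(\psi(a)+\gamma)$ whenever the exponent hits $1$ (namely at $(0,3)$ and $(1,1)$), and convert the resulting exponent-$1$ pieces through the partial fraction $\tfrac1{n+a}-\tfrac1{n+b}=\tfrac{b-a}{(n+a)(n+b)}$ into the combined harmonic sums $\sum_n H_n^{(s)}/((n+a)(n+b))$. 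The three $k_1=0$ terms then assemble into the six-line brace multiplied by $-2/(b-a)$, while the single $k_1=1$ term carries $\zeta(2k_1)=\zeta(2)$ and, after the doubling, becomes precisely the separately displayed $-\tfrac{4}{b-a}\zeta(2)\{\cdots\}$ contribution. Collecting all pieces and moving them to one side yields the stated vanishing identity.

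The only real obstacle is clerical rather than conceptual: one must keep the signs $(-1)^{k_2}$ and $(-1)^{p+k_2}$ straight, remember that the $p=m$ doubling interacts with pulling the $\zeta(2)$-term out into its own display, and apply the index-$1$ conventions in exactly the two places where the Hurwitz exponent degenerates. With the sign bookkeeping in order, the corollary follows immediately from \eqref{Formu-quadratic-para-ES}.
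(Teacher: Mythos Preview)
Your proposal is correct and follows exactly the paper's approach: the paper simply states that the corollary is obtained by letting $(p,m)=(2,2)$ in Theorem~\ref{thm-quadratic-para-ES}, and your write-up carries out precisely that substitution with the detailed bookkeeping (including the $\zeta(0)=-\tfrac12$, $\zeta(1;a)=-(\psi(a)+\gamma)$ conventions and the partial-fraction conversion at the degenerate exponents) made explicit.
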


\begin{re} From \cite[Eq. (2.24)]{Xu2017-JMAA} and \cite[Thm. 3.2]{XC2019}, we know that for integer $p\geq 2$, the parametric Euler sums
\begin{align*}
\sum_{n=1}^\infty \frac{H_n}{(n+a)^p} \quad (a\notin \N^-)
\end{align*}
can be expressed in terms of a combination of products of (Hurwitz) zeta function and digamma function. Moreover, if considering the contour integral
\begin{align*}
\oint\limits_{\left( \infty  \right)}\frac{\pi \cot(\pi z)\psi^{(p-1)}(-z)\psi^{(m-1)}(-z)}{(p-1)!(m-1)!}r(z)dz=0,
\end{align*}
by a similar argument as in the proof of Theorem \ref{thm-quadratic-para-ES},
we also obtain a similar evaluation of Theorem \ref{thm-para1-ration-funct-residue}. More general, we can consider the general contour integral
\begin{align*}
\oint\limits_{\left( \infty  \right)}\frac{\pi \cot(\pi z)\psi^{(p_1-1)}(-z)\cdots\psi^{(p_m-1)}(-z)}{(p_1-1)!\cdots(p_m-1)!}r(z)dz=0
\end{align*}
and
\begin{align*}
\oint\limits_{\left( \infty  \right)}\frac{\pi \psi^{(p_1-1)}(-z)\cdots\psi^{(p_m-1)}(-z)}{\sin(\pi z)(p_1-1)!\cdots(p_m-1)!}r(z)dz=0
\end{align*}
to establish some quite general evaluations of (alternating) parametric Euler sums. We leave the detail to the interested reader.
\end{re}

\medskip

{\bf Funding.} The authors is supported by the National Natural Science Foundation of China (Grant No. 12101008), the Natural Science Foundation of Anhui Province (Grant No. 2108085QA01, 2108085QG304) and the University Natural Science Research Project of Anhui Province (Grant No. KJ2020A0057).

\end{document}